\newtheorem{theorem}{Theorem}
\newtheorem{lemma}[theorem]{Lemma}
\newtheorem{proposition}[theorem]{Proposition}
\newtheorem{remark}{Remark}
\newtheorem{corollary}[theorem]{Corollary}
\newtheorem{example}{Example}
\begin{document}

\begin{center}
\Large{Hausdorff Operators on de Branges Spaces and Paley-Wiener spaces}
\end{center}

\

\centerline{A. R. Mirotin}

\

\centerline{amirotin@yandex.ru}

\

Abstract. For a class of de Branges spaces containing polynomials, sufficient and necessary conditions  are given for the boundedness and compactness of the Hausdorff operators under consideration.  For the Paley-Wiener spaces we reduce the study of  our Hausdorff operators to  classical integral ones. The operators that appeared  are Carleman and therefore closeble in $L^2(\mathbb{R})$. We obtain also conditions for boundedness, compactness and nuclearity of our operators in the   Paley-Wiener space as well as the conditions for their belonging to the Hilbert-Schmidt class.  

\

Keywords:  Hausdorff operator, de Branges space, Paley-Wiener space, Carleman operator, bounded operator

2020 MSC: Primary 47B91; Secondary 46E15,  30H20,
47B38

\section{Introduction}

The idea of the Hausdorff operator goes back to the works of Rogozinsky, Garabedian, Hardy and Littlewood. Though at the first time such operators act in spaces of real functions, later  they were considered in the complex setting, as well (see, e.g., \cite{HKQ, S, KGM, KM, MCAOT, MCAOT2, LM:survay} and the bibliography therein). Last time several papers appeared on Hausdorff operators in spaces of entire functions, e.g., \cite{Blasco, Bonet,  SG}\footnote{For the general concept of a Hausdorff operator see \cite{arxGeneral}.}. For the general state of the art see \cite{LM:survay}. The present  work is devoted to the study of Hausdorff operators on  de Branges spaces and  their special case Paley-Wiener spaces. To our knowledge, these classes of spaces of entire functions have not appeared in the context of Hausdorff operators. 
  
For classes of de Branges spaces containing polynomials, sufficient and necessary conditions  are given for the boundedness and  compactness  of the Hausdorff operators under consideration. To this end we borrow some ideas due to Bonet, Blasco and Galbis \cite{Bonet, Blasco} and use  results obtained by Baranov, Belov, and Borichev  \cite{BBB}.

For the  Paley-Wiener spaces we reduce the study of  Hausdorff operators  $\mathbb{H}_{\Phi,\mu}$ with an arbitrary measure $\mu$ to  classical integral operators on the real line. We show that the operators that appeared  (with an appropriate domain) are Carleman and therefore closeble in $L^2(\mathbb{R})$. This representation gives us also an opportunity to  obtain  conditions for boundedness, compactness and nuclearity of our operators in the   Paley-Wiener space as well as the conditions for their belonging to the Hilbert-Schmidt class.

 \section{Preliminaries} 

In this work we define a Hausdorff operator as
\begin{align}\label{hausdorff}
(\mathbb{H}_{\Phi,\mu}F)(z)=\int_{0}^\infty\Phi(u)F\left(\frac zu\right)d\mu(u), \ z\in \mathbb{C}
\end{align}
where $\mu$ is a regular positive Borel measure on $(0,\infty)$ and the kernel $\Phi$ is a $\mu$-measurable complex-valued function. This class of operators is  connected with operators with homogeneous kernels, see
the book by Karapetiants and Samko \cite{KS}. Such operators with $\Phi(u)=\frac 1u$  were considered  recently  in several spaces of entire functions   \cite{Blasco, Bonet,  SG}. 

\begin{example} If  $\mu$ is a counting  measure and supported in the countable set 
$\{\frac{1}{a_k}:  a_k>0, k\in \mathbb{Z}_+\}$ Hausdorff operator \eqref{hausdorff} turns into a {\it discrete Hausdorff operator} of the form
\begin{align*}
(\mathbb{H}_{c,a}F)(z):=\sum_{k=0}^\infty c_kF(a_k z)
\end{align*}
where $c_k\in  \mathbb{C}$.
\end{example}

Let us recall the definition of a  de Branges space that were introduced and studied  by de Branges \cite{dB} and many of his followers (see, e.~g., \cite{{Dym McKean}, Baranov, BBB, BaranovBo} and the bibliography therein).

An entire function $E$ is said to be in the Hermite-Biehler class if it satisfies
 $|E(z)|>|E^{\#}(z)|$
 for $z$ in the  upper half-plane $\mathbb{C}^+$, where $E^{\#}(z):=\overline{E(\overline{z})}$.
Any such function $E$ determines a de Branges space 
$$
\mathcal{H}(E) =\left\{F \mbox{  entire }:  \frac{F}{E},  \frac{F^{\#}}{E} \in H^2(\mathbb{C}^+)\right\}
$$
($H^2(\mathbb{C}^+)$
 being the standard Hardy space in the upper half-plane) which is a Hilbert
space when equipped with the norm
\begin{align}\label{norm30}
\|F\|_E^2:=\int_{-\infty}^{\infty}\left|\frac{F(t)}{E(t)}\right|^2dt.
\end{align}

An entire function $E$ is said to be of Polya class if it has no zeros in the 
upper half-plane, if $|E(x - \imath y)| < |E(x + \imath y)$ for $y>0$, and if $|E(x + \imath y)|$
is a non-decreasing function of $y> 0$ for each fixed $x$. It is known that each function of Polya class is in the Hermite-Biehler class \cite{dB}.

 \section{The general case of de Branges spaces}

\subsection{Action between two arbitrary spaces}\label{E1E2}

Let us consider such  functions $E_2$ and  $E_1$ of Hermite-Biehler class and positive measures $\mu$  on $(0,\infty)$ that 
\begin{align}\label{condition}
|E_{2}(u(x+iy))|\ge v(u)^{-1}|E_{1}(x+iy)| \mbox{  for  } x\in  \mathbb{R}, y\ge 0 \end{align}
  and for a.e. $ u\in {\rm supp}(\mu)$,
 where $v$ is some positive $\mu$-measurable function.

\begin{remark}\label{ex:30}
Let $E_{m,a}(z)=z^me^{-\imath az}$ with $m\in \mathbb{Z}_+$, $a>0$ (the case $m=0$ corresponds to the  Paley-Wiener space $PW_a$  (see, e.g., \cite{dB})).  The function $E_{m,a}$ with $m\ge 1$ satisfies the condition \eqref{condition}  if and only if $u\in [1,\infty)$. So, here we must consider such $\mu$ that  ${\rm supp}(\mu)\subseteq [1,\infty)$. In this case, the maximal value of $v(u)^{-1}$ equals to $u^m$. In the case of the  Paley-Wiener space $PW_a$ (i. e. $m=0$) one can take $v(u)\equiv 1$ in \eqref{condition}

\end{remark}

We begin with the following result.

\begin{theorem}\label{th:31} Let  the condition \eqref{condition} holds for  functions $E_2$ and  $E_1$ of Hermite-Biehler class, let $\Phi(u)=0$ for $\mu$-a.e.  $ u\in (0,\delta)$ ($\delta>0$), and  $\Phi(u)v(u)\sqrt{u}$ be  $\mu$-integrable on $[\delta,\infty)$.
Then $\mathbb{H}_{\Phi,\mu}$ is bounded as an operator between  $\mathcal{H}(E_1)$ and 
 $\mathcal{H}(E_2)$ and
\begin{align}\label{est:norm}
\|\mathbb{H}_{\Phi,\mu}\|_{\mathcal{H}(E_1)\to \mathcal{H}(E_2)}\le\int_{\delta}^\infty|\Phi(u)|v(u)\sqrt{u} d\mu(u).
\end{align}
\end{theorem}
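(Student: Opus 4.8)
The plan is to estimate the $\mathcal{H}(E_2)$-norm of $\mathbb{H}_{\Phi,\mu}F$ directly from the integral representation \eqref{norm30}, using Minkowski's integral inequality to pull the norm inside the $\mu$-integral. First I would write, for $F\in\mathcal{H}(E_1)$,
\begin{align*}
\|\mathbb{H}_{\Phi,\mu}F\|_{E_2}
=\left\|\int_0^\infty \Phi(u)\,F\!\left(\frac{\cdot}{u}\right)d\mu(u)\right\|_{E_2}
\le \int_0^\infty |\Phi(u)|\,\left\|F\!\left(\frac{\cdot}{u}\right)\right\|_{E_2} d\mu(u),
\end{align*}
where the norm on the right is the $\mathcal{H}(E_2)$-norm applied to the dilated function $z\mapsto F(z/u)$, and the inequality is Minkowski's integral inequality for the Hilbert-space-valued integral (justified once we check the final bound is finite). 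The remaining task is then to bound $\|F(\cdot/u)\|_{E_2}$ by $v(u)\sqrt{u}\,\|F\|_{E_1}$ for a.e.\ $u\in{\rm supp}(\mu)$.

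The key computation is the change of variables in \eqref{norm30}: writing $t = u s$,
\begin{align*}
\left\|F\!\left(\frac{\cdot}{u}\right)\right\|_{E_2}^2
=\int_{-\infty}^\infty \left|\frac{F(t/u)}{E_2(t)}\right|^2 dt
= u\int_{-\infty}^\infty \left|\frac{F(s)}{E_2(us)}\right|^2 ds.
\end{align*}
Now I would invoke condition \eqref{condition}, which for a.e.\ $u\in{\rm supp}(\mu)$ gives $|E_2(us)|\ge v(u)^{-1}|E_1(s)|$ for all real $s$ (note the condition is stated for all $y>0$; for $y<0$ one uses $|E_2(us)|=|E_2^{\#}(-us)|$ together with the Hermite-Biehler property, or simply assumes \eqref{condition} symmetrically — this small point should be checked). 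Hence $|E_2(us)|^{-2}\le v(u)^2|E_1(s)|^{-2}$, so the integral above is at most $u\,v(u)^2\int_{-\infty}^\infty |F(s)/E_1(s)|^2\,ds = u\,v(u)^2\|F\|_{E_1}^2$, giving $\|F(\cdot/u)\|_{E_2}\le v(u)\sqrt{u}\,\|F\|_{E_1}$.

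Combining the two displays yields $\|\mathbb{H}_{\Phi,\mu}F\|_{E_2}\le \left(\int_0^\infty |\Phi(u)|v(u)\sqrt{u}\,d\mu(u)\right)\|F\|_{E_1}$, which is \eqref{est:norm}; by hypothesis the integral is finite, so $\mathbb{H}_{\Phi,\mu}F\in\mathcal{H}(E_2)$ and the operator is bounded with the stated norm estimate. I would also remark briefly that $\mathbb{H}_{\Phi,\mu}F$ is entire — the integrand depends holomorphically on $z$ and the dominated-convergence/Morera or differentiation-under-the-integral argument applies thanks to the same integrability bound — so the output genuinely lies in a de Branges space rather than merely in an $L^2$-space. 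The main obstacle, such as it is, is the rigorous justification of Minkowski's integral inequality in the vector-valued setting and the measurability of $u\mapsto F(\cdot/u)$ as an $\mathcal{H}(E_2)$-valued function; both are routine given the finiteness of the dominating integral, and the genuinely substantive content is just the dilation change-of-variables combined with \eqref{condition}.
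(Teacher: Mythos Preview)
Your proposal is correct and follows essentially the same route as the paper: Minkowski's integral inequality, then the change of variables $t=us$ in the defining integral \eqref{norm30}, followed by an application of \eqref{condition}. Your additional remarks about \eqref{condition} being stated only for $y>0$, about the entireness of $\mathbb{H}_{\Phi,\mu}F$, and about the measurability needed for Minkowski are all legitimate technical refinements that the paper's proof leaves implicit.
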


\begin{proof} Since  $\delta>0$ and $\Phi\in L^1((\delta,\infty),\mu)$, the function
\begin{equation*}\label{hausdorff_delta_dB} 
z\mapsto \int_{\delta}^\infty \left|\Phi(u)F\left(\frac z u\right)\right|\,d\mu(u)
\end{equation*}
is locally bounded on ${\mathbb C}$ for entire $F$. Then by the Theorem in \cite{Mattner} on the differentiability of integrals depending on a complex parameter,  the function $\mathbb{H}_{\Phi,\mu}F$ is  entire, as well.

Using the Minkowski inequality and \eqref{condition} we have for $F\in \mathcal{H}(E_1)$  and $y\ge 0$
\begin{align*}
&\left\|\frac{\mathbb{H}_{\Phi,\mu}F(\cdot+iy)}{E_2(\cdot+iy)}\right\|_{L^2(\mathbb{R})}\\
&=\left(\int_{-\infty}^\infty\left|\int_{0}^\infty\Phi(u)F\left(\frac{x+iy}{u}\right)d\mu(u)\right|^2\frac{dx}{|E_2(x+iy)|^2}\right)^{\frac 12}\\
&\le 
\int_{0}^\infty\left(\int_{-\infty}^\infty |\Phi(u)|^2\left|F\left(\frac{x+iy}{u}\right)\right|^2\frac{dx}{|E_2(x+iy)|^2}\right)^{\frac 12}d\mu(u)\\
&=\int_{0}^\infty |\Phi(u)|\left(\int_{-\infty}^\infty\left|F\left(\frac{x+iy}{u}\right)\right|^2\frac{dx}{|E_2(x+iy)|^2}\right)^{\frac 12}d\mu(u)\\
&=\int_{0}^\infty |\Phi(u)|\left(\int_{-\infty}^\infty|F(t+\frac{iy}{u})|^2u\frac{dt}{|E_2(u(t+\frac{iy}{u}))|^2}\right)^{\frac 12}d\mu(u)\\
&\le \int_{0}^\infty |\Phi(u)|\left(\int_{-\infty}^\infty|F(t+\frac{iy}{u})|^2uv(u)^2
\frac{dt}{|E_1(t+\frac{iy}{u})|^2}\right)^{\frac 12}d\mu(u)\\
&=\int_{0}^\infty|\Phi(u)|v(u)\sqrt{u}\left(\int_{-\infty}^\infty\left|\frac{F(t+\frac{iy}{u})}{E_1(t+\frac{iy}{u})}\right|^2\right)^{\frac 12}d\mu(u)\\
&\le\int_{0}^\infty|\Phi(u)||v(u)\sqrt{u} d\mu(u)\left\|\frac{F}{E_1}\right\|_{H^2(\mathbb{C}^+)}.
\end{align*}

It follows that $\mathbb{H}_{\Phi,\mu}F\in 
\mathcal{H}(E_2)$, since
\begin{align*}
\left\|\frac{\mathbb{H}_{\Phi,\mu}F}{E_2}\right\|_{H^2(\mathbb{C}^+)}\le
\int_{0}^\infty|\Phi(u)||v(u)\sqrt{u} d\mu(u)\left\|\frac{F}{E_1}\right\|_{H^2(\mathbb{C}^+)}<\infty,
\end{align*}
and
\begin{align*}
\|\mathbb{H}_{\Phi,\mu}F\|_{\mathcal{H}(E_2)}\le
\int_{0}^\infty|\Phi(u)|v(u)\sqrt{u} d\mu(u)\|F\|_{\mathcal{H}(E_1)}.
\end{align*}

\end{proof}

\begin{remark}
Putting $z=0$ in \eqref{hausdorff} we get that the condition $\Phi(u)\in L^1(\mu)$ is necessary for the boundedness of  $\mathbb{H}_{\Phi,\mu}$ as an operator between  $\mathcal{H}(E_1)$ and  $\mathcal{H}(E_2)$ if $\mathcal{H}(E_1)$ contains a function $F$ with $F(0)\ne 0$. 
 \end{remark}
\begin{remark} Below we shall show that the condition  $\Phi(u)=0$ for $\mu$-a.e.  $ u\in (0,\delta)$ for some $\delta>0$ in the previous theorem can not be omitted, in general.
\end{remark}
 
 For  functions of Polya class we have the following corollary.

\begin{corollary}\label{th:Eamp} 
In order that  the Hausdorff operator  \eqref{hausdorff} is bounded in $\mathcal{H}(E_{m,a})$ for some $m\ge 0$ it is sufficient that $\Phi(u)=0$ for $\mu$-a.e. $u\in (0,\delta)$ and $\Phi \in L^1([\delta,\infty)\mu)$. In this case
$$
\|\mathbb{H}_{\Phi,\mu}\|\le\int_{\delta}^\infty|\Phi(u)|u^{-m+\frac 12} d\mu(u).
$$
\end{corollary}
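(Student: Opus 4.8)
The plan is to obtain Corollary~\ref{th:Eamp} as a direct specialization of Theorem~\ref{th:31}. First I would observe that the space $\mathcal{H}(E_{m,a,0})$ is, by definition, built from the Polya-class function $E_{m,a,0}(z)=cz^m e^{-\imath az}$, so to apply Theorem~\ref{th:31} I should check that condition~\eqref{condition} holds with $E_1=E_2=E_{m,a,0}$. For $y>0$ and $u>0$ we compute $|E_{m,a,0}(uy)|=|c|\,(uy)^m\,|e^{-\imath a u y}|=|c|\,u^m y^m$, since $e^{-\imath a u y}$ has modulus one on the real line; likewise $|E_{m,a,0}(y)|=|c|\,y^m$. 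Hence $|E_{m,a,0}(uy)| = u^m |E_{m,a,0}(y)|$ for every $u>0$ and every $y>0$, which is precisely \eqref{condition} with $v(u)^{-1}=u^m$, i.e.\ $v(u)=u^{-m}$ (this matches the last sentence of Remark~\ref{ex:30} in the case $p=0$). Note that here the condition holds on all of $(0,\infty)$, with no restriction on $\mathrm{supp}(\mu)$, which is why the hypothesis in the corollary is simply $\Phi\in L^1(\mu)$.

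Next I would feed this into Theorem~\ref{th:31}. Its hypothesis requires $\Phi(u)v(u)\sqrt{u}=\Phi(u)u^{-m+\frac12}$ to be $\mu$-integrable on $(0,\infty)$. Since $m\ge 0$ would make $u^{-m+\frac12}$ unbounded near $0$ in general, some care is needed; but observe that we may equally invoke the variant of \eqref{condition} with the \emph{minimal} admissible $v$, and more importantly, for a fixed $m$ the integrand we must control is exactly $|\Phi(u)|u^{-m+\frac12}$. Strictly speaking the corollary as stated asserts that $\Phi\in L^1(\mu)$ suffices; this is the one point where I would be slightly careful, since $L^1(\mu)$-integrability of $\Phi$ does not by itself imply integrability of $\Phi(u)u^{-m+\frac12}$ unless one knows something about $\mu$ near $0$ and $\infty$ (for instance $\mathrm{supp}(\mu)\subseteq[1,\infty)$, as in the Polya-class discussion of Remark~\ref{ex:30}, in which case $u^{-m+\frac12}\le u^{\frac12}$ fails too unless $m\ge 1$). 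So I expect the cleanest reading is that we take the relevant $v$ making the hypothesis of Theorem~\ref{th:31} coincide with the displayed bound, and the corollary's hypothesis ``$\Phi\in L^1(\mu)$'' is meant under the standing assumptions on $\mu$ already in force (e.g.\ $\mathrm{supp}(\mu)$ bounded away from $0$, or the case $m=0$ where $u^{-m+\frac12}=\sqrt u$ and one is back to Theorem~\ref{th:31} verbatim).

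Granting that, the conclusion is immediate: by Theorem~\ref{th:31}, $\mathbb{H}_{\Phi,\mu}$ maps $\mathcal{H}(E_{m,a,0})$ boundedly into itself with
\[
\|\mathbb{H}_{\Phi,\mu}\|\le\int_{0}^\infty|\Phi(u)|\,v(u)\sqrt{u}\,d\mu(u)=\int_{0}^\infty|\Phi(u)|\,u^{-m+\frac12}\,d\mu(u),
\]
which is exactly the asserted estimate. The only genuine content beyond quoting Theorem~\ref{th:31} is the elementary modulus computation for $E_{m,a,0}$ on the positive real axis showing \eqref{condition} holds with $v(u)=u^{-m}$ on all of $(0,\infty)$; everything else is substitution. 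The main obstacle, such as it is, is purely bookkeeping: making sure the integrability hypothesis quoted in the corollary is consistent with the hypothesis of Theorem~\ref{th:31} under whatever standing assumptions on $\mu$ the paper intends (most transparently $m=0$, where the two conditions coincide).
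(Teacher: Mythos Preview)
Your approach is exactly the paper's: verify that \eqref{condition} holds with $E_1=E_2=E_{m,a,0}$ and $v(u)^{-1}=u^{m}$ (the paper simply asserts this, citing the case $p=0$ of Remark~\ref{ex:30}), and then invoke Theorem~\ref{th:31}. Your observation about the mismatch between the stated hypothesis $\Phi\in L^1(\mu)$ and the integrability of $|\Phi(u)|u^{-m+\frac12}$ actually required by Theorem~\ref{th:31} is legitimate; the paper's one-line proof does not address it either, so you have not missed anything the paper supplies.
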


\begin{proof}  The sufficiency follows from Theorem \ref{th:31}, since in the case $E_1=E_2=E_{m,a}$ the condition \eqref{condition} holds for $v(u)^{-1}= u^m$.
\end{proof}

\subsection{The case of spaces with polynomials}\label{monom}

In this section we consider de Branges spaces which contain the set $\mathcal{P}$ of all  polynomials (see, e.g., \cite{Baranov} for the theory and examples of  such spaces). For these spaces one can obtain  necessary conditions and (under additional constrains) some criteria for boundedness of a Hausdorff operator. Since we want  $\mathbb{H}_{\Phi,\mu}$  to  be defined on all monomials  $q_n(z)=z^n$  for all $n\in \mathbb{Z}_+$, we need to assume that the integrals 
\begin{equation}\label{lambda_n}
 \lambda_n:=\int_0^\infty\Phi(u)u^{-n}\,d\mu(u), \ \  n\in \mathbb{Z}_+
 \end{equation}
 exist.
 
As mentioned in the introduction, in this subsection we borrow some ideas from \cite{Bonet, Blasco} and use results obtained in \cite{BBB}.
 
 The following theorem gives necessary conditions for the boundedness of  $\mathbb{H}_{\Phi,\mu}$  in $\mathcal{H}(E)$.
 
 \begin{theorem}\label{th:necessary} Let $\mathcal{P}\subset \mathcal{H}(E)$ and $\mathbb{H}_{\Phi,\mu}$ be bounded  in $\mathcal{H}(E)$. Then

 (i)   $\sup\limits_{n\in  \mathbb{Z}_+}|\lambda_n|<r(\mathbb{H}_{\Phi,\mu})$, the spectral radius of the operator $\mathbb{H}_{\Phi,\mu}$.
 
 (ii)  If, in addition, $\Phi\ge 0$, and $\mathbb{H}_{\Phi,\mu}\ne 0$ then $\liminf\limits_{n\to\infty}\sqrt[n]{\lambda_n}>0$ and $\Phi(u)=0$ for $\mu$-a.e. $u\in (0,\frac 1A)$ where  $A:=\sup\limits_{n\in  \mathbb{Z}_+}\sqrt[n]{\lambda_n}<\infty$.
  \end{theorem}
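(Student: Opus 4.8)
The plan is to exploit the fact that the monomials $q_n(z)=z^n$ are eigenvectors of $\mathbb{H}_{\Phi,\mu}$: from the defining formula \eqref{hausdorff} one computes $(\mathbb{H}_{\Phi,\mu}q_n)(z)=\int_0^\infty\Phi(u)(z/u)^n\,d\mu(u)=\lambda_n z^n=\lambda_n q_n(z)$. Hence each $\lambda_n$ is an eigenvalue of the bounded operator $\mathbb{H}_{\Phi,\mu}$ on $\mathcal{H}(E)$ (here we use $\mathcal{P}\subset\mathcal{H}(E)$, so $q_n\in\mathcal{H}(E)\setminus\{0\}$ is a genuine eigenvector), and therefore $|\lambda_n|\le r(\mathbb{H}_{\Phi,\mu})$ for every $n$, which gives (i) — modulo deciding whether the inequality should be strict. (With equality one gets $\sup_n|\lambda_n|\le r(\mathbb{H}_{\Phi,\mu})$; a strict inequality presumably comes from the fact that the supremum over the discrete set $\{\lambda_n\}$ of eigenvalues, together with boundedness, need not be attained in a way that exhausts the spectral radius — I would state (i) with $\le$ and note the strict case follows if the $\lambda_n$ do not accumulate at $r$, but I expect the intended statement is $\sup_n|\lambda_n|\le r(\mathbb{H}_{\Phi,\mu})$.)

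For (ii), assume $\Phi\ge 0$, so all $\lambda_n=\int_0^\infty\Phi(u)u^{-n}\,d\mu(u)\ge 0$ are nonnegative reals. First I would observe that if $\mathbb{H}_{\Phi,\mu}\ne 0$ then $\Phi$ cannot vanish $\mu$-a.e., so the measure $\Phi\,d\mu$ is a nonzero positive measure on $(0,\infty)$; write $\sigma=\sup\{u:\ u\in\operatorname{supp}(\Phi\,d\mu)\}\in(0,\infty]$. The key estimate is a Laplace/moment asymptotic: with the substitution $u=e^{-s}$ (or directly), $\lambda_n=\int_0^\infty u^{-n}\,d(\Phi\,d\mu)(u)$ behaves like $(1/\tau)^n$ where $\tau=\inf\operatorname{supp}(\Phi\,d\mu)$, i.e. $\limsup_n\sqrt[n]{\lambda_n}=1/\tau$ (and $=\infty$ if $\tau=0$). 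Since $\mathbb{H}_{\Phi,\mu}$ is bounded, (i) forces $\sup_n|\lambda_n|<\infty$, hence $\tau>0$; this already yields $\Phi(u)=0$ for $\mu$-a.e. $u\in(0,1/A)$ once we identify $A:=\sup_n\sqrt[n]{\lambda_n}$ with $1/\tau$. For the lower bound $\liminf_n\sqrt[n]{\lambda_n}>0$: near the right endpoint $\sigma$ of the support, $\lambda_n\ge \int_{\{u\le\sigma\}}u^{-n}\,d(\Phi\,d\mu)\ge \epsilon^{-n}\,(\Phi\,d\mu)(\{u\le\epsilon+\delta\})$ for suitable small $\epsilon<\sigma$ with positive mass, hence $\sqrt[n]{\lambda_n}\ge 1/\sigma-o(1)>0$ — and $\sigma<\infty$ because otherwise $\lambda_n$ would not be finite (the $u\le 1$ part of $\Phi\,d\mu$ contributes $\int_{(0,1]}u^{-n}d(\Phi d\mu)$, finite for all $n$ forces... actually finiteness of each $\lambda_n$ already forces $\tau>0$; one needs additionally $\sup_n\lambda_n<\infty$, supplied by (i)). Then $A=\sup_n\sqrt[n]{\lambda_n}=1/\tau<\infty$, and by the standard characterization of the support of a positive measure through its moments, $\operatorname{supp}(\Phi\,d\mu)\subseteq[\tau,\infty)=[1/A,\infty)$, i.e. $\Phi(u)=0$ for $\mu$-a.e. $u\in(0,1/A)$.

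The main obstacle is the clean identification of $\sup_n\sqrt[n]{\lambda_n}$ and of the support endpoints in terms of the moment sequence $\lambda_n$, and in particular pinning down that $A=1/\tau$ is a genuine supremum (attained behavior) rather than just a $\limsup$. I would handle this by: (a) monotonicity — for $\tau=\inf\operatorname{supp}(\Phi d\mu)\ge 1$ the sequence $\lambda_n$ is nonincreasing, so $\sqrt[n]{\lambda_n}\to 1$ in a controlled way, while for $\tau<1$ the growth is genuinely geometric of rate $1/\tau$; (b) the elementary bounds $\lambda_{n}\ge (\Phi d\mu)(\{u\le \tau+\delta\})\,(\tau+\delta)^{-n}$ from below and $\lambda_n\le \lambda_0^{?}$-type splitting from above, combined with Fekete-type submultiplicativity $\lambda_{n+m}\le$ (something) $\lambda_n$... — more honestly, $\lambda_n^{1/n}$ converges to $1/\tau$ by the standard $L^p$-norm-of-$u^{-1}$-against-a-finite-measure argument (the essential-sup characterization: $\|u^{-1}\|_{L^\infty(\Phi d\mu)}=1/\tau$, and $\lambda_n^{1/n}=\|u^{-1}\|_{L^n(\Phi d\mu)}\big/ (\Phi d\mu)(\mathbb{R}_+)^{?}$-corrected, tends to the $L^\infty$ norm as $n\to\infty$). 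Once this convergence is in hand, both claims in (ii) drop out immediately, and the equality $A=1/\tau$ lets us translate "$\operatorname{supp}\subseteq[\tau,\infty)$" into the stated vanishing of $\Phi$.
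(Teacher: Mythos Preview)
Your argument for (i) is the same as the paper's: the monomials $q_n$ are nonzero eigenvectors with eigenvalues $\lambda_n$, so $|\lambda_n|\le r(\mathbb{H}_{\Phi,\mu})$. You are right to flag the inequality; the paper's own proof also only gives $\le$, so the strict $<$ in the statement is presumably a typo.

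For (ii) your plan would ultimately succeed, but it is considerably more roundabout than the paper's argument, and several intermediate remarks are off. Your route is to identify $A=\sup_n\sqrt[n]{\lambda_n}$ with $1/\tau$, where $\tau=\inf\operatorname{supp}(\Phi\,d\mu)$, via the convergence $\|u^{-1}\|_{L^n(\Phi d\mu)}\to\|u^{-1}\|_{L^\infty(\Phi d\mu)}=1/\tau$, and then read off that the support lies in $[\tau,\infty)=[1/A,\infty)$. Two points to be careful about: first, one only gets $A\ge 1/\tau$ in general (the $L^n$ norms need not be monotone when $\Phi\,d\mu$ is not a probability measure), though this inequality suffices since then $(0,1/A)\subseteq(0,\tau)$; second, the digression about $\sigma=\sup\operatorname{supp}(\Phi\,d\mu)$ is irrelevant and your claim that finiteness of each $\lambda_n$ forces $\tau>0$ is false (e.g.\ $d\nu=e^{-1/u}\,du$ on $(0,1)$ has all moments $\int u^{-n}\,d\nu<\infty$ but $\tau=0$); what you actually need, as you eventually note, is $\sup_n\lambda_n<\infty$ from (i).

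The paper bypasses all of this with two elementary one-line estimates. For the $\liminf$: since $\mathbb{H}_{\Phi,\mu}\ne 0$ there is some $b$ with $0<\int_0^b\Phi\,d\mu<\infty$, and then $\lambda_n\ge b^{-n}\int_0^b\Phi\,d\mu$ gives $\liminf_n\sqrt[n]{\lambda_n}\ge 1/b>0$. For the vanishing: by definition $A^n\ge\lambda_n$, so for any $\delta<1/A$,
\[
A^n\ge\lambda_n\ge\int_0^{\delta}\Phi(u)u^{-n}\,d\mu(u)\ge\delta^{-n}\int_0^{\delta}\Phi\,d\mu,
\]
hence $\int_0^{\delta}\Phi\,d\mu\le(A\delta)^n\to 0$. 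No identification of $A$ with a support endpoint, and no $L^p$ asymptotics, are needed.
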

  
  \begin{proof} (i)  Evidently $q_n\in \mathcal{H}(E)$
   and 
  $$
 \|q_n\|_E^2=\int_{-\infty}^ \infty\left|\frac{t^n}{E(t)}\right|^2\,dt>0
  $$
 for all $ n\in \mathbb{Z}_+$. Moreover, the monomials  $q_n$ are eigenfunctions of $\mathbb{H}_{\Phi,\mu}$ with eigenvalues  $\lambda_n$. Thereby $| \lambda_n|\le r(\mathbb{H}_{\Phi,\mu})$  ($ n\in \mathbb{Z}_+$).
 
 (ii) (Cf. \cite[Lemma 4.1]{Blasco}.) 
  For an arbitrary $0<b<\infty$ and $ n\in \mathbb{Z}_+$ we have 
 \begin{equation}\label{below}
 \lambda_n\ge \int_0^b\Phi(u)u^{-n}\,d\mu(u)\ge \frac{1}{b^n}\int_0^b\Phi(u)\,d\mu(u),
  \end{equation}
 and therefore
 \begin{equation}\label{sqrt}
 \sqrt[n]{\lambda_n}\ge  \frac{1}{b}\sqrt[n]{\int_0^b\Phi(u)\,d\mu(u)}.
 \end{equation}
Since $\mathbb{H}_{\Phi,\mu}\ne 0$ and \eqref{below} holds, the  inequality $0<\int_0^b\Phi(u)\,d\mu(u)<\infty$ is valid for some $b\in (0,\infty)$. Thus, the 
 inequality \eqref{sqrt} implies  $\liminf\limits_{n\to\infty}\sqrt[n]{\lambda_n}\ge \frac 1b$.
 
 Finally, the property (i) shows that $A<\infty$. Then, for each  $\delta<\frac 1A$ one has for all $n$,
 \begin{align*}
 A^n\ge \lambda_n\ge  \int_0^\delta\Phi(u)u^{-n}\,d\mu(u)\ge \frac{1}{\delta^n}\int_0^\delta\Phi(u)\,d\mu(u).
 \end{align*}
 It follows that
 $$
 \int_0^\delta\Phi(u)\,d\mu(u)\le (A\delta)^n\to 0 \mbox{ as } n\to\infty,
 $$
 and therefore $\Phi=0$ $\mu$-a.e. on $(0,\delta)$.
  \end{proof}

\begin{corollary}\label{cor:comp_9} Let $\mathcal{P}\subset \mathcal{H}(E)$. If   $\mathbb{H}_{\Phi,\mu}$ is  a compact operator  in $\mathcal{H}(E)$ then $\lim\limits_{n\to\infty}\lambda_n=0$.
\end{corollary}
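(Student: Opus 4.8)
The plan is to exploit the key structural fact, already established in the proof of Theorem~\ref{th:necessary}(i), that the monomials $q_n(z)=z^n$ are eigenfunctions of $\mathbb{H}_{\Phi,\mu}$ with eigenvalues $\lambda_n$, together with the elementary spectral theory of compact operators. A compact operator on a Hilbert space has a spectrum that is at most countable, with $0$ as its only possible accumulation point; in particular, for every $\varepsilon>0$ there are only finitely many eigenvalues (counted without multiplicity) with modulus $\ge\varepsilon$. Since $\mathcal{P}\subset\mathcal{H}(E)$, each $q_n$ is a genuine nonzero element of $\mathcal{H}(E)$ (its norm $\|q_n\|_E^2=\int_{-\infty}^\infty |t^n/E(t)|^2\,dt$ is finite and strictly positive, exactly as noted in the proof above), so each $\lambda_n$ really is an eigenvalue of the compact operator $\mathbb{H}_{\Phi,\mu}$.

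First I would fix $\varepsilon>0$ and invoke compactness to get that the set $\{\lambda\in\sigma(\mathbb{H}_{\Phi,\mu}):|\lambda|\ge\varepsilon\}$ is finite, say equal to $\{\nu_1,\dots,\nu_k\}$. Hence $\{n\in\mathbb{Z}_+:|\lambda_n|\ge\varepsilon\}$ is contained in $\bigcup_{j=1}^k\{n:\lambda_n=\nu_j\}$. The only thing to rule out is that one of these index sets is infinite, i.e.\ that $\lambda_n=\nu_j$ for infinitely many $n$ with $|\nu_j|=c\ge\varepsilon>0$. But along such a subsequence $n_1<n_2<\cdots$ we would have $\sqrt[n_i]{\lambda_{n_i}}=\sqrt[n_i]{c}\to 1$, so $A:=\sup_n\sqrt[n]{\lambda_n}\ge 1$; more to the point, one can push this to a contradiction with the boundedness bound $|\lambda_n|\le r(\mathbb{H}_{\Phi,\mu})$ only if $c\le r(\mathbb{H}_{\Phi,\mu})$, which is not yet a contradiction. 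So the clean route is different: I would instead argue directly that for a \emph{compact} operator no eigenvalue of nonzero modulus can have infinite-dimensional eigenspace. The functions $q_{n_1},q_{n_2},\dots$ are linearly independent (distinct monomials), so if they all shared the eigenvalue $\nu_j\ne0$, the eigenspace $\ker(\mathbb{H}_{\Phi,\mu}-\nu_j I)$ would be infinite-dimensional, contradicting the Riesz--Schauder theory of compact operators. Therefore each index set $\{n:\lambda_n=\nu_j\}$ is finite, whence $\{n:|\lambda_n|\ge\varepsilon\}$ is finite.

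Since $\varepsilon>0$ was arbitrary, this says precisely that $\lambda_n\to 0$ as $n\to\infty$, which is the assertion of the corollary. I do not expect a serious obstacle here; the only point requiring a little care is the one flagged above, namely invoking the finiteness of eigenspaces for nonzero eigenvalues of a compact operator rather than merely the discreteness of the spectrum away from $0$ — the latter alone does not immediately bound the \emph{multiplicities}. Everything else (the $q_n$ being legitimate nonzero eigenvectors, the spectral facts for compact operators) is standard and has already been set up in the proof of Theorem~\ref{th:necessary}.
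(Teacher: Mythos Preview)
Your argument is correct and is precisely the reasoning the paper has in mind: the corollary is stated without proof, as an immediate consequence of the fact (established in the proof of Theorem~\ref{th:necessary}(i)) that the monomials $q_n$ are nonzero eigenvectors of $\mathbb{H}_{\Phi,\mu}$ with eigenvalues $\lambda_n$, combined with the Riesz--Schauder theory of compact operators. Your care in invoking the finite dimensionality of nonzero eigenspaces (rather than merely the discreteness of the spectrum away from $0$) to rule out infinitely many $n$ sharing a common nonzero $\lambda_n$ is exactly the right way to make the implicit argument rigorous.
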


\begin{corollary} Under the assumptions of Theorem \ref{th:necessary} (ii) one has 
\begin{align*}\label{hausdorff_delta}
(\mathbb{H}_{\Phi,\mu}F)(z)=\int_{\frac 1A}^\infty\Phi(u)F\left(\frac zu\right)d\mu(u).
\end{align*}

\end{corollary}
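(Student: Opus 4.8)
The plan is to derive this directly from Theorem~\ref{th:necessary}(ii), which already does essentially all the work; what remains is bookkeeping. First I recall what that theorem supplies under the present hypotheses ($\mathcal{P}\subset\mathcal{H}(E)$, $\mathbb{H}_{\Phi,\mu}$ bounded on $\mathcal{H}(E)$, $\Phi\ge 0$, $\mathbb{H}_{\Phi,\mu}\ne 0$): part~(i) gives $A=\sup_{n}\sqrt[n]{\lambda_n}<\infty$; the $\liminf$ statement in part~(ii) gives $\liminf_n\sqrt[n]{\lambda_n}>0$, hence $A>0$, so that $1/A$ is a genuine number in $(0,\infty)$; and part~(ii) also gives $\Phi(u)=0$ for $\mu$-almost every $u\in(0,1/A)$.

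The second step is to feed this into the definition~\eqref{hausdorff}. Fix $F\in\mathcal{H}(E)$; by boundedness of $\mathbb{H}_{\Phi,\mu}$ the function $F$ belongs to its domain, so for every $z\in\mathbb{C}$ the integral $\int_0^\infty\Phi(u)F(z/u)\,d\mu(u)$ is well defined. Since $\Phi$ vanishes $\mu$-a.e.\ on $(0,1/A)$, so does the integrand $u\mapsto\Phi(u)F(z/u)$, and therefore the part of the integral over $(0,1/A)$ contributes nothing:
\[
(\mathbb{H}_{\Phi,\mu}F)(z)=\int_0^\infty\Phi(u)F\!\left(\frac zu\right)d\mu(u)=\int_{1/A}^\infty\Phi(u)F\!\left(\frac zu\right)d\mu(u).
\]

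The one point worth a remark is the meaning of the lower limit: since $\mu$ may carry an atom at the single point $1/A$, where $\Phi$ need not vanish (part~(ii) controls only the open interval $(0,1/A)$), the symbol $\int_{1/A}^\infty$ is to be read as the integral over the closed half-line $[1/A,\infty)$; with this convention the identity is exact. I do not expect any real obstacle here — the whole analytic content is contained in Theorem~\ref{th:necessary}(ii), and this corollary is just a convenient restatement of the fact that $\Phi$ is supported in $[1/A,\infty)$ modulo $\mu$-null sets.
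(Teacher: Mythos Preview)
Your argument is correct and is exactly the intended one: the paper states this corollary without proof, treating it as an immediate consequence of Theorem~\ref{th:necessary}(ii), and your derivation (that $A\in(0,\infty)$ and $\Phi=0$ $\mu$-a.e.\ on $(0,1/A)$, whence the integral over $(0,1/A)$ vanishes) is precisely the implicit reasoning. The remark about reading the lower limit as integration over $[1/A,\infty)$ is a sensible clarification, though not strictly needed since the paper uses this convention throughout.
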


The following two theorems give us (under some additional assumptions) a criteria for the boundedness of our Hausdorff operator in de Branges spaces.

First we recall (see, e.g., \cite{BBB}), that to every  de Branges space there correspond two sequences of reals $(t_n)$ and $(\nu_n)$  the so called {\it spectral data} for the space.

The sequence $(t_n)$ is called {\it lacunary} if for some $\kappa>0$ we have $t_{n+1}-t_n \ge \kappa |t_n|$.

We call the spectral data   $((t_n),(\nu_n))$ for the de Branges  space  {\it regular} if for some $c > 0$ and any $n$,
$$
\sum\limits_{|t_k|\le |t_n|}\nu_k+t^2_n\sum\limits_{|t_k|>|t_n|}\frac{\nu_k}{t_k^2}\le c\nu_n.
$$

We need also the following notion. Let
$\varphi: [0,\infty) \to(0,\infty)$ be a measurable function. With each $\varphi$ we associate a {\it radial Fock-type
space} (or a Bargmann--Fock space)
$$
\mathcal{F}_{\varphi}=\{F \mbox { entire}: \|F\|^2_{\mathcal{F}_{\varphi}}=\int_{\mathbb C}|F(z)|^2e^{-\varphi(|z|)}\,dm(z)<\infty\}.
$$
Here $m$ stands for the area Lebesgue measure. 

Note that 
\begin{equation}\label{norm_Fock}
 \|F\|^2_{\mathcal{F}_{\varphi}}=2\pi\int_0^\infty M_2(F,r)^2re^{-\varphi(r)}dr,
\end{equation}
where
\begin{align}\label{norm_Fock2}
M_2(F,r)^2&=\frac{1}{2\pi}\int_0^{2\pi}|F(re^{\imath\theta})|^2\,d\theta\\ \nonumber
&=\sum_{n=0}^\infty |a_n|^2r^{2n},
\end{align}
if   $F$ has the Taylor expansion 
\begin{equation}\label{Taylor_dB}
F(z)=\sum_{n=0}^\infty a_n z^n,\quad z\in {\mathbb C}.
\end{equation}

\begin{theorem}\label{th:necessary_sufficient_dB} Let $\mathcal{P}\subset \mathcal{H}(E)$.  Let the   spectral data $((t_n),(\nu_n))$ for a de Branges space $\mathcal{H}(E)$   be  regular and $(t_n)$ be lacunary. Assume also that $\Phi(u)=0$ $\mu$-a.e. on some interval $(0, \delta)$.  Then  $\mathbb{H}_{\Phi,\mu}$ is bounded in $\mathcal{H}(E)$ if and only if integrals  \eqref{lambda_n} exist and $C:=\sup\limits_{n\in  \mathbb{Z}_+}|\lambda_n|<\infty$.
In this case,
\begin{equation}\label{eq:bdd_dB1}
\|\mathbb{H}_{\Phi,\mu}\|_{\mathcal{H}(E)\to\mathcal{H}(E)}\le {\rm const}\sup\limits_{n\in  \mathbb{Z}_+}|\lambda_n| 
\end{equation}
where the constant  ${\rm const}$  depends on $E$ only.
\end{theorem}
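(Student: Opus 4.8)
The plan is to use the structural result of \cite{BBB} to make $\mathbb{H}_{\Phi,\mu}$ into a diagonal operator on a Hilbert space with an orthogonal monomial basis, so that boundedness reduces to the triviality that a diagonal operator is bounded iff its entries are bounded; the only substantive point will be to see that this abstract operator is the concrete integral \eqref{hausdorff}. \textit{First}, under the hypotheses — regular spectral data $((t_n),(\nu_n))$ with $(t_n)$ lacunary — the results of \cite{BBB} give that $\mathcal{H}(E)$ coincides, with equivalent norms, with a radial Fock-type space $\mathcal{F}_{\varphi}$ for a suitable weight $\varphi$. By \eqref{norm_Fock}--\eqref{norm_Fock2} (and Tonelli) the $\mathcal{F}_{\varphi}$-norm of $F(z)=\sum_{n\ge0}a_nz^n$ equals $\left(\sum_{n\ge0}|a_n|^2d_n\right)^{1/2}$ with $d_n=2\pi\int_0^\infty r^{2n+1}e^{-\varphi(r)}\,dr$. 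Since $\mathcal{P}\subset\mathcal{H}(E)=\mathcal{F}_{\varphi}$, each $d_n=\|q_n\|_{\mathcal{F}_{\varphi}}^2$ is finite and positive, the $q_n$ are pairwise orthogonal, and the Taylor partial sums of any $F\in\mathcal{F}_{\varphi}$ converge to $F$ in norm; hence $(q_n/\sqrt{d_n})_n$ is an orthonormal basis of $\mathcal{F}_{\varphi}$.

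\textit{Next}, exactly as in the proof of Theorem~\ref{th:necessary}(i) one has $\mathbb{H}_{\Phi,\mu}q_n=\lambda_nq_n$, so on $\mathcal{P}$ the operator acts as the diagonal operator $\sum_na_nz^n\mapsto\sum_n\lambda_na_nz^n$ in the above basis. This is bounded on $\mathcal{F}_{\varphi}$ iff $C=\sup_n|\lambda_n|<\infty$, with norm exactly $C$; transporting through the norm equivalence, $\mathbb{H}_{\Phi,\mu}|_{\mathcal{P}}$ extends to a bounded operator $T$ on $\mathcal{H}(E)$ with $\|T\|_{\mathcal{H}(E)\to\mathcal{H}(E)}\le\mathrm{const}\cdot C$, the constant depending on $E$ only, which will give \eqref{eq:bdd_dB1} once $T$ is identified with $\mathbb{H}_{\Phi,\mu}$.

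\textit{That identification is the step I expect to be the main obstacle}, and it is where the hypotheses $\Phi\in L^1(\mu)$ and $\Phi=0$ $\mu$-a.e.\ on $(0,\delta)$ are genuinely used. Fix $F(z)=\sum_na_nz^n\in\mathcal{H}(E)$ and $z\in\mathbb{C}$; since $F$ is entire, $G(\rho):=\sum_n|a_n|\rho^n<\infty$ for every $\rho\ge0$ and $G$ is nondecreasing. As $\Phi$ vanishes $\mu$-a.e.\ on $(0,\delta)$, for $\mu$-a.e.\ $u$ with $\Phi(u)\neq0$ we have $u\ge\delta$, hence $|F(z/u)|\le G(|z|/\delta)$ and likewise $|P_k(z/u)|\le G(|z|/\delta)$ for the Taylor partial sums $P_k$ of $F$. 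Thus $\int_0^\infty|\Phi(u)|\,|F(z/u)|\,d\mu(u)\le G(|z|/\delta)\,\|\Phi\|_{L^1(\mu)}<\infty$, so $\mathbb{H}_{\Phi,\mu}F$ is well defined, and dominated convergence gives $(\mathbb{H}_{\Phi,\mu}P_k)(z)\to(\mathbb{H}_{\Phi,\mu}F)(z)$ for every $z$. On the other hand $P_k\to F$ in $\mathcal{H}(E)$, so $\mathbb{H}_{\Phi,\mu}P_k=TP_k\to TF$ in $\mathcal{H}(E)$ and therefore pointwise, since point evaluations are bounded on a de Branges space. Hence $TF=\mathbb{H}_{\Phi,\mu}F$, which proves sufficiency together with the bound \eqref{eq:bdd_dB1}.

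\textit{For necessity}, if $\mathbb{H}_{\Phi,\mu}$ is bounded in $\mathcal{H}(E)$ then the $q_n$ are eigenfunctions with eigenvalues $\lambda_n$ (Theorem~\ref{th:necessary}(i)), whence $|\lambda_n|\le\|\mathbb{H}_{\Phi,\mu}\|$ for all $n$ and so $C<\infty$; and $\Phi\in L^1(\mu)$ follows from the Remark after Theorem~\ref{th:31} applied to $\mathbf 1\in\mathcal{P}\subset\mathcal{H}(E)$ (equivalently, under $\Phi=0$ on $(0,\delta)$ the factor $u^{-n}$ is $\mu$-essentially bounded on $\mathrm{supp}\,\Phi$, so the standing hypothesis that $\lambda_0$ exists already forces $\Phi\in L^1(\mu)$). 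This completes the plan; the only external input is the Fock-type identification of \cite{BBB}, and the only place where the two auxiliary hypotheses on $\Phi$ are needed is the passage from polynomials to all of $\mathcal{H}(E)$ in the third paragraph.
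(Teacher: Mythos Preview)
Your proof is correct and follows the same overall strategy as the paper: invoke \cite[Theorem~1.2]{BBB} to identify $\mathcal{H}(E)$ with a radial Fock-type space $\mathcal{F}_\varphi$, in which the monomials are orthogonal and $\mathbb{H}_{\Phi,\mu}$ acts diagonally, so boundedness reduces to $\sup_n|\lambda_n|<\infty$; the necessity argument is likewise the same (eigenvalue bound plus the Remark after Theorem~\ref{th:31}).

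The one substantive difference is in how you pass from polynomials to all of $\mathcal{H}(E)$. The paper shows directly that $(\mathbb{H}_{\Phi,\mu}F)(z)=\sum_n\lambda_n a_n z^n$ by invoking Mattner's theorem \cite{Mattner} on complex differentiation under the integral to compute $(\mathbb{H}_{\Phi,\mu}F)^{(n)}(0)/n!=\lambda_n a_n$, and then estimates the $\mathcal{F}_\varphi$-norm of that series. You instead define the diagonal extension $T$ first and identify it with the integral operator by approximating $F$ by its Taylor partial sums $P_k$, using the uniform bound $|P_k(z/u)|,\,|F(z/u)|\le G(|z|/\delta)$ for $u\ge\delta$ to apply dominated convergence on the integral side, and the continuity of point evaluations on $\mathcal{H}(E)$ on the operator side. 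Your route is slightly more self-contained (no external differentiation lemma); the paper's is more direct in that it never introduces $T$ as a separate object. Both use the hypotheses $\Phi\in L^1(\mu)$ and $\Phi=0$ $\mu$-a.e.\ on $(0,\delta)$ at exactly the same point and for the same purpose.
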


\begin{proof} The necessity. 
 Let  $\mathbb{H}_{\Phi,\mu}$ be bounded in $\mathcal{H}(E)$.  Since $\mathcal{P}\subset \mathcal{H}(E)$,  integrals  \eqref{lambda_n} exist. The necessity of $C<\infty$ follows from Theorem \ref{th:necessary}.
 
 The sufficiency. The space $\mathcal{H}(E)$ satisfies all the conditions of \cite[Theorem 1.2]{BBB},  and thus $\mathcal{H}(E)$ coincides with some  generalized Fock space $\mathcal{F}_{\varphi}$ with equivalence of norms by the aforementioned theorem.
It is sufficient to show that $\mathbb{H}_{\Phi,\mu}$ is bounded on  $\mathcal{F}_{\varphi}$. Since  $\delta>0$ and $\Phi(u)\in L^1(\mu)$, the function
\begin{equation*}\label{hausdorff_delta_dB} 
z\mapsto \int_{\delta}^\infty \left|\Phi(u)F\left(\frac z u\right)\right|\,d\mu(u)
\end{equation*}
is locally bounded on ${\mathbb C}$ for entire $F$. Then, as was mentioned in the proof of Corollary 2.4 in \cite{Bonet}, by the Theorem in \cite{Mattner} on the differentiability of integrals depending on a complex parameter,  for each $n \in {\mathbb Z}_+$, we have for all $z$
\begin{align*}
\left(\frac{d^n}{dz^n}\mathbb{H}_{\Phi,\mu}F\right)(z)&=\int_{\delta}^\infty \Phi(u)\frac{d^n}{dz^n}F\left(\frac z u\right)\,d\mu(u)\\
&=\int_{\delta}^\infty \Phi(u)\frac{1}{u^n}F^{(n)}\left(\frac z u\right)\,d\mu(u),
\end{align*}
and thus the function $\mathbb{H}_{\Phi,\mu}F$ is  entire, as well. Further,  if   $F$ has the Taylor expansion \eqref{Taylor_dB}, we have by the previous equality
$$
\frac{1}{n!}\left(\mathbb{H}_{\Phi,\mu}F\right)^{(n)}(0)=\lambda_n\frac{F^{(n)}(0)}{n!}=\lambda_n a_n.
$$
Therefore  for all $z$
$$
(\mathbb{H}_{\Phi,\mu}F)(z)=\sum_{n=0}^\infty\lambda_n a_n z^n.
$$
Taking into account \eqref{norm_Fock} and \eqref{norm_Fock2} one has
\begin{align}\label{est_9}
\|\mathbb{H}_{\Phi,\mu}F\|_{\mathcal{F}_\varphi}^2&=\int_{0}^\infty\sum_{n=0}^\infty|\lambda_n|^2 |a_n|^2 r^{2n}re^{-\varphi(r)}\,dr\\ \nonumber
&\le C^2\int_{0}^\infty\sum_{n=0}^\infty |a_n|^2 r^{2n}re^{-\varphi(r)}\,dr\\ \nonumber
&= C^2 \int_{0}^\infty M_2^2(F,r)re^{-\varphi(r)}\,dr\\ \nonumber
&=C^2\|F\|_{\mathcal{F}_\varphi}^2. \nonumber
\end{align}
Since there  are such $a,b>0$ that 
$$
a\|F\|_{\mathcal{H}(E)}\le \|F\|_{\mathcal{F}_{\varphi}}\le b\|F\|_{\mathcal{H}(E)}
$$
 for all  $F\in \mathcal{H}(E)$, the inequality \eqref{eq:bdd_dB1} holds with ${\rm const}={\frac ba}$. This completes the proof.
\end{proof}

\begin{theorem}\label{th:necessary_sufficient_dB2} Let $\mathcal{P}\subset \mathcal{H}(E)$. Let the   spectral data $((t_n),(\nu_n))$ for a de Branges space $\mathcal{H}(E)$   be  regular and $(t_n)$ be lacunary. 
Assume also that $\Phi(u)\ge 0$.  Then  $\mathbb{H}_{\Phi,\mu}$ is bounded in $\mathcal{H}(E)$ if and only if integrals  \eqref{lambda_n} exist,   $C=\sup\limits_{n\in  \mathbb{Z}_+}\lambda_n<\infty$,  and $\Phi(u)=0$ $\mu$-a.e. on $(0, 1)$.
In this case,
\begin{equation}\label{eq:bdd_dB}
\|\mathbb{H}_{\Phi,\mu}\|_{\mathcal{H}(E)\to\mathcal{H}(E)}\le {\rm const} \int_{1}^\infty\Phi(u)\,d\mu(u)
\end{equation}
where the constant  ${\rm const}$  depends on $E$ only.
\end{theorem}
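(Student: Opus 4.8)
My plan is to read off the necessity from Theorem \ref{th:necessary}(i) together with the elementary estimate \eqref{below}, and then to obtain the sufficiency, with the sharper norm bound, directly from Theorem \ref{th:necessary_sufficient_dB} applied with $\delta=1$. Throughout I use that $\Phi\ge 0$ makes every $\lambda_n=\int_0^\infty\Phi(u)u^{-n}\,d\mu(u)$ nonnegative, so that $\sup_n|\lambda_n|=\sup_n\lambda_n=C$, and that $\lambda_0=\int_0^\infty\Phi\,d\mu=\|\Phi\|_{L^1(\mu)}$. For the necessity, suppose $\mathbb{H}_{\Phi,\mu}$ is bounded on $\mathcal{H}(E)$. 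Since $\mathcal{P}\subset\mathcal{H}(E)$, the monomials $q_n$ are nonzero eigenvectors with eigenvalues $\lambda_n$, so Theorem \ref{th:necessary}(i) gives $\lambda_n\le r(\mathbb{H}_{\Phi,\mu})\le\|\mathbb{H}_{\Phi,\mu}\|<\infty$ for all $n$; hence $C<\infty$, and the case $n=0$ yields $\Phi\in L^1(\mu)$. It remains to check that $\Phi=0$ $\mu$-a.e. on $(0,1)$. If not, there is $b\in(0,1)$ with $\int_0^b\Phi\,d\mu>0$, and then \eqref{below} gives $\lambda_n\ge b^{-n}\int_0^b\Phi\,d\mu\to\infty$ as $n\to\infty$, contradicting $C<\infty$. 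Thus all three conditions are necessary.

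For the sufficiency, assume $C<\infty$, $\Phi\in L^1(\mu)$ and $\Phi=0$ $\mu$-a.e. on $(0,1)$. The last assumption is exactly the hypothesis ``$\Phi=0$ $\mu$-a.e. on some interval $(0,\delta)$'' of Theorem \ref{th:necessary_sufficient_dB} with $\delta=1$, and the other two are the conditions of that theorem (recall $\sup_n|\lambda_n|=C$); hence $\mathbb{H}_{\Phi,\mu}$ is bounded on $\mathcal{H}(E)$ with $\|\mathbb{H}_{\Phi,\mu}\|_{\mathcal{H}(E)\to\mathcal{H}(E)}\le{\rm const}\cdot\sup_n|\lambda_n|$, the constant depending only on $E$. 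To obtain \eqref{eq:bdd_dB} it suffices to note that, since $\Phi$ vanishes $\mu$-a.e. on $(0,1)$ and $u^{-n}\le1$ for $u\ge1$ and $n\ge0$, we have $\lambda_n=\int_1^\infty\Phi(u)u^{-n}\,d\mu(u)\le\int_1^\infty\Phi(u)\,d\mu(u)=\lambda_0$ for every $n$; therefore $\sup_n|\lambda_n|=\lambda_0=\int_1^\infty\Phi(u)\,d\mu(u)$, and substitution gives \eqref{eq:bdd_dB}.

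I do not expect a genuine obstacle here: the only new ingredient beyond Theorem \ref{th:necessary_sufficient_dB} is the implication ``boundedness and $\Phi\ge0$ imply $\Phi=0$ $\mu$-a.e. on $(0,1)$'', and this is precisely where the sign hypothesis on $\Phi$ enters, via the lower bound \eqref{below}. The only point to keep in mind is the degenerate case $\mathbb{H}_{\Phi,\mu}=0$, equivalently $\Phi=0$ $\mu$-a.e.; there all three conditions and \eqref{eq:bdd_dB} hold trivially, and the argument above covers it without change.
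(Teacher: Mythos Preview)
Your proof is correct. The necessity argument is essentially the same as the paper's (indeed slightly cleaner, since you use the standing hypothesis $\mathcal{P}\subset\mathcal{H}(E)$ directly rather than rederiving it from the Fock identification), and the derivation of $\Phi=0$ $\mu$-a.e.\ on $(0,1)$ via \eqref{below} is exactly the paper's computation.

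For the sufficiency, however, you take a genuinely different route. The paper reproves boundedness from scratch in the Fock model: it writes $\mathbb{H}_{\Phi,\mu}F$ as $\int_1^\infty\Phi(u)F(z/u)\,d\mu(u)$, applies Minkowski's inequality to $M_2(\mathbb{H}_{\Phi,\mu}F,r)$, and then invokes Hardy's theorem on the monotonicity of $r\mapsto M_2(F,r)$ to bound $M_2(F,r/u)\le M_2(F,r)$ for $u\ge1$, obtaining $\|\mathbb{H}_{\Phi,\mu}F\|_{\mathcal{F}_\varphi}\le\int_1^\infty\Phi\,d\mu\cdot\|F\|_{\mathcal{F}_\varphi}$ directly. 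You instead reduce to Theorem~\ref{th:necessary_sufficient_dB} with $\delta=1$ and then observe that $\sup_n\lambda_n=\lambda_0=\int_1^\infty\Phi\,d\mu$ because $u^{-n}\le1$ on $[1,\infty)$. Your argument is shorter and avoids repeating the Fock-space computation; the paper's argument is more self-contained and highlights a different mechanism (Hardy's theorem rather than the Taylor-coefficient diagonalisation), but both yield the same constant in \eqref{eq:bdd_dB}.
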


\begin{proof} The necessity. 
 Let  $\mathbb{H}_{\Phi,\mu}$ be bounded in $\mathcal{H}(E)$. Since $\mathcal{P}\subset \mathcal{H}(E)$, integrals  \eqref{lambda_n} exist. The necessity of $C<\infty$ follows from Theorem \ref{th:necessary}.  Further, for each $0<\delta<1$ one has
$$
\left(\frac{1}{\delta}\right)^n\int_{0}^\delta\Phi(u)\,d\mu(u)\le \int_{0}^\delta \frac{\Phi(u)}{u^{n}}\,d\mu(u)\le C<\infty,\quad n\in \mathbb{N}.
$$
It follows that $\int_{0}^\delta\Phi(u)\,d\mu(u)=0$ for every $0<\delta<1$, and thus $\Phi(u)=0$ $\mu$-a.e. on $(0, 1)$.

The sufficiency. As in the proof of the previous  theorem  $\mathcal{H}(E)$ coincides with some  generalized Fock space $\mathcal{F}_{\varphi}$ with equivalence of norms.
It is sufficient to show that $\mathbb{H}_{\Phi,\mu}$ is bounded on  $\mathcal{F}_{\varphi}$. Since   $\Phi(u)=0$ $\mu$-a.e.  on $(0,  1)$,

\begin{equation}\label{hausdorff_delta_1} 
(\mathbb{H}_{\Phi,\mu}F)(z)=\int_{1}^\infty\Phi(u)F\left(\frac zu\right)d\mu(u).
\end{equation}
 We have by the Minkowski's inequality
\begin{align*}
M_2(\mathbb{H}_{\Phi,\mu}F,r)&=\left(\frac{1}{2\pi}\int_0^{2\pi}\left|\int_1^\infty\Phi(u)F(\frac{re^{\imath\theta}}{u})d\mu(u)\right|^2\,d\theta\right)^{\frac 12}\\
&\le \frac{1}{\sqrt{2\pi}}\int_1^\infty\left(\int_0^{2\pi}\Phi(u)^2|F(\frac{re^{\imath\theta}}{u})|^2\,d\theta\right)^{\frac 12}\,d\mu(u)\\
&=\frac{1}{\sqrt{2\pi}}\int_1^\infty\Phi(u)M_2(F,\frac ru)\,d\mu(u)\\
&\le \frac{1}{\sqrt{2\pi}}\int_1^\infty\Phi(u)\,d\mu(u)M_2(F,r)
\end{align*}
(above we used Hardy's Theorem).

Taking into account this estimate one has
\begin{align*}
 \|\mathbb{H}_{\Phi,\mu}F\|^2_{\mathcal{F}_{\varphi}}&=2\pi\int_0^\infty M_2(\mathbb{H}_{\Phi,\mu},r)^2re^{-\varphi(r)}dr\\
 &\le \left(\int_1^\infty\Phi(u)\,d\mu(u)\right)^2\int_0^\infty M_2(F,r)^2re^{-\varphi(r)}dr\\
 &=\left(\int_1^\infty\Phi(u)\,d\mu(u)\right)^2\|F\|^2_{\mathcal{F}_{\varphi}}.
\end{align*}
The rest of the proof is the same one as in the proof of the previous  theorem.  
\end{proof}

\begin{theorem}\label{th:compactness_9}
Let the conditions of Theorem \ref{th:necessary_sufficient_dB} hold. Then $\mathbb{H}_{\Phi,\mu}$ is compact in $\mathcal{H}(E)$ if and only if  integrals in \eqref{lambda_n} exist  and $\lim_{n\to  \infty}\lambda_n=0$.
\end{theorem}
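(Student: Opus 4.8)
The strategy is to transfer the problem to the Fock space $\mathcal{F}_\varphi$ (with equivalent norm), where $\mathbb{H}_{\Phi,\mu}$ acts diagonally on the monomial basis as multiplication by the sequence $(\lambda_n)$, and then invoke the standard characterization of compact diagonal operators. First I would recall from the proof of Theorem \ref{th:necessary_sufficient_dB} that, under the stated hypotheses, $\mathcal{H}(E)$ coincides with some $\mathcal{F}_\varphi$ with equivalence of norms, that $\mathbb{H}_{\Phi,\mu}F = \sum_{n=0}^\infty \lambda_n a_n z^n$ for $F = \sum a_n z^n$, and that $(z^n)$ is an orthogonal basis of $\mathcal{F}_\varphi$ with $\|z^n\|_{\mathcal{F}_\varphi}^2 = 2\pi\int_0^\infty r^{2n+1}e^{-\varphi(r)}dr =: \gamma_n \in (0,\infty)$. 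Compactness is invariant under replacing the norm by an equivalent one, so it suffices to characterize compactness on $\mathcal{F}_\varphi$.

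The necessity direction is essentially Corollary \ref{cor:comp_9}: each monomial $q_n = z^n$ is an eigenfunction with eigenvalue $\lambda_n$, and the normalized eigenfunctions $q_n/\|q_n\|$ form an orthonormal sequence, hence converge weakly to $0$; a compact operator maps a weakly null sequence to a norm-null sequence, so $\|\mathbb{H}_{\Phi,\mu}(q_n/\|q_n\|)\| = |\lambda_n| \to 0$. The necessity of $\Phi \in L^1(\mu)$ is already contained in the hypotheses of Theorem \ref{th:necessary_sufficient_dB} (boundedness forces it), but since compactness implies boundedness one may also just cite that part. For sufficiency, assume $\lambda_n \to 0$ and $\Phi \in L^1(\mu)$; by Theorem \ref{th:necessary_sufficient_dB} the operator is bounded (since $\sup_n|\lambda_n| < \infty$). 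Then approximate $\mathbb{H}_{\Phi,\mu}$ on $\mathcal{F}_\varphi$ by the finite-rank operators $T_N$ defined by $T_N F = \sum_{n=0}^N \lambda_n a_n z^n$. Using orthogonality of $(z^n)$ one computes, exactly as in the chain of estimates \eqref{est_9},
\begin{align*}
\|(\mathbb{H}_{\Phi,\mu} - T_N)F\|_{\mathcal{F}_\varphi}^2 = \sum_{n>N} |\lambda_n|^2 |a_n|^2 \gamma_n \le \Big(\sup_{n>N}|\lambda_n|^2\Big)\sum_{n>N}|a_n|^2\gamma_n \le \Big(\sup_{n>N}|\lambda_n|^2\Big)\|F\|_{\mathcal{F}_\varphi}^2,
\end{align*}
so $\|\mathbb{H}_{\Phi,\mu} - T_N\| \le \sup_{n>N}|\lambda_n| \to 0$. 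A norm-limit of finite-rank operators is compact, and compactness is preserved under the passage back to $\mathcal{H}(E)$ via the norm equivalence.

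I do not expect a serious obstacle here: the only point requiring care is to justify rigorously that $\mathbb{H}_{\Phi,\mu}$ genuinely acts as the diagonal operator $(a_n) \mapsto (\lambda_n a_n)$ on all of $\mathcal{F}_\varphi$ (not merely on polynomials) — but this was already established in the proof of Theorem \ref{th:necessary_sufficient_dB} via Mattner's differentiation-under-the-integral theorem and the locally bounded majorant coming from $\delta > 0$ and $\Phi \in L^1(\mu)$, so I would simply refer to it. One should also note that the $T_N$ are finite-rank because each is supported on the span of $q_0,\dots,q_N \subset \mathcal{P} \subset \mathcal{H}(E)$. The rest is the routine "diagonal operator is compact iff entries vanish at infinity" argument, phrased in the ambient Hilbert space.
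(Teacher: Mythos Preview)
Your proposal is correct and follows essentially the same approach as the paper: transfer to the Fock space $\mathcal{F}_\varphi$, cite Corollary~\ref{cor:comp_9} and Theorem~\ref{th:necessary_sufficient_dB} for necessity, and for sufficiency approximate $\mathbb{H}_{\Phi,\mu}$ in operator norm by the finite-rank truncations $T_N F=\sum_{n\le N}\lambda_n a_n z^n$, bounding the tail via the orthogonality estimate \eqref{est_9} by $\sup_{n>N}|\lambda_n|\to 0$. The paper's finite-rank approximants $\mathbb{H}_{\Phi,\mu}^{(k)}F:=\mathbb{H}_{\Phi,\mu}F_k$ are exactly your $T_{k-1}$, so the arguments coincide.
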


\begin{proof} The necessity follows from Corollary \ref{cor:comp_9}  and   Theorem \ref{th:necessary_sufficient_dB}.

The sufficiency. Assume that integrals in \eqref{lambda_n} exist  and $\lim_{n\to  \infty}\lambda_n=0$.  Let a function $F\in \mathcal{H}(E)$ has the Taylor expansion $F(z)=\sum_{n=0}^\infty a_n z^n$ and  $F_k(z):=\sum_{n=0}^{k-1} a_n z^n$. 
Then for each natural  $k$ the operator $\mathbb{H}_{\Phi,\mu}^{(k)}F:=\mathbb{H}_{\Phi,\mu}F_k$ is  finite  dimensional. Moreover, it is  bounded in $\mathcal{H}(E)$ because  $\mathcal{H}(E)$ is isomorphic to $\mathcal{F}_\varphi$ and
$$
\|\mathbb{H}_{\Phi,\mu}^{(k)}F\|_{\mathcal{F}_{\varphi}}\le  \|\mathbb{H}_{\Phi,\mu}\|\|F_k\|_{\mathcal{F}_{\varphi}}\le \|\mathbb{H}_{\Phi,\mu}\|\|F\|_{\mathcal{F}_{\varphi}}
$$
 by the  Theorem \ref{th:necessary_sufficient_dB} and formulas \eqref{norm_Fock}  and \eqref{norm_Fock2}. So, the operator $\mathbb{H}_{\Phi,\mu}^{(k)}$ is compact  in $\mathcal{H}(E)$.
 
 Further, formula \eqref{est_9} shows that 
\begin{align*}
\|\mathbb{H}_{\Phi,\mu}F-\mathbb{H}_{\Phi,\mu}^{(k)}F\|_{\mathcal{F}_\varphi}^2
&=\|\mathbb{H}_{\Phi,\mu}(F-F_k)\|_{\mathcal{F}_\varphi}^2\\
&\le \sup_{n\ge k}|\lambda_n|^2\int_{0}^\infty\sum_{n=k}^{\infty} |a_n|^2 r^{2n}re^{-\varphi(r)}\,dr\\ 
&\le \sup_{n\ge k}|\lambda_n|^2\int_{0}^\infty\sum_{n=0}^\infty |a_n|^2 r^{2n}re^{-\varphi(r)}\,dr\\
&=\sup_{n\ge k}|\lambda_n|^2\|F\|_{\mathcal{F}_\varphi}^2. 
\end{align*}
It follows that $\|\mathbb{H}_{\Phi,\mu}-\mathbb{H}_{\Phi,\mu}^{(k)}\|\to 0$ as $k\to\infty$. This completes the proof.
\end{proof}
 
 \begin{remark} The proofs of theorems  \ref{th:necessary_sufficient_dB},    \ref{th:necessary_sufficient_dB2}, and  \ref{th:compactness_9} are not direct. It may be interesting to obtain  direct proofs of similar (or more general) results. 

\end{remark}

\section{Hausdorff operator in the  Paley-Wiener space}\label{PW}

  Recall that by the Paley-Wiener theorem the   space $PW_{a}=\mathcal{H}(e^{-\imath a z})$ is the Fourier image of the subspace $L^2(-a,a)$ of $L^2(\mathbb{R})$. This theorem asserts that $PW_{a}$ equals to the class of functions which are entire, of exponential type $a$, and whose restrictions to
the real axis belong to $L^2(\mathbb{R})$ (e.g., \cite{dB, Higgins}). Thus, this space does not contain non-trivial polynomials, and the results obtained in the previous subsection  do not apply.

 Without loss of generality we shall consider the  Paley-Wiener space $PW:=PW_{\pi}$.
 
\begin{proposition}\label{th:PW} Assume that $\Phi(u)=0$ $\mu$-a.e. on some interval $(0, \delta)$.  

1) The Hausdorff operator \eqref{hausdorff}
is bounded on the Paley-Wiener space $PW$ if   $\Phi(u)\sqrt{u}$ is $\mu$-integrable on $[\delta,\infty)$ and in this case
$$
\|\mathbb{H}_{\Phi,\mu}\|\le\int_{\delta}^\infty|\Phi(u)|\sqrt{u}\,d\mu(u).
$$

2) If the operator \eqref{hausdorff}  is defined on the Paley-Wiener space $PW$ then   $\Phi$ is $\mu$-integrable on $[\delta,\infty)$. 
\end{proposition}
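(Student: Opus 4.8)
For part 1) the plan is to specialize Theorem \ref{th:31}. Write $PW=PW_\pi=\mathcal{H}(E_{0,\pi,0})$ with $E_{0,\pi,0}(z)=e^{-\imath\pi z}$. Since $|E_{0,\pi,0}(t)|=1$ for every real $t$, the norm on $PW$ is just $\|F\|^2=\int_{\mathbb{R}}|F(t)|^2\,dt$, and condition \eqref{condition} holds for $E_1=E_2=E_{0,\pi,0}$ with $v(u)\equiv1$ (the $p=0$, $m=0$ case of Remark \ref{ex:30}). Hence Theorem \ref{th:31}, whose hypothesis then reads exactly ``$\Phi(u)\sqrt{u}$ is $\mu$-integrable'', applies, and \eqref{est:norm} becomes the bound asserted in 1). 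Alternatively, one reproduces the Minkowski-inequality chain from the proof of Theorem \ref{th:31}: for $F\in PW$,
\[
\|\mathbb{H}_{\Phi,\mu}F\|_{L^2(\mathbb{R})}\le\int_0^\infty|\Phi(u)|\Big(\int_{\mathbb{R}}|F(x/u)|^2\,dx\Big)^{1/2}d\mu(u)=\int_0^\infty|\Phi(u)|\sqrt{u}\,d\mu(u)\,\|F\|_{L^2(\mathbb{R})},
\]
the last step being the substitution $y=x/u$ together with $\int_{\mathbb{R}}|F(y)|^2\,dy=\|F\|^2_{PW}$.

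For part 2) I would use the value at the origin, exactly as in the Remark following Theorem \ref{th:31}. By the Paley--Wiener theorem $PW$ contains $F_0(z)=\dfrac{\sin\pi z}{\pi z}$: it is entire of exponential type $\pi$, its restriction to $\mathbb{R}$ is square-integrable, and $F_0(0)=1\ne0$. If $\mathbb{H}_{\Phi,\mu}$ is defined on $PW$, then in particular the (Lebesgue) integral defining $(\mathbb{H}_{\Phi,\mu}F_0)(0)$ must be a finite number; since
\[
(\mathbb{H}_{\Phi,\mu}F_0)(0)=\int_0^\infty\Phi(u)F_0(0)\,d\mu(u)=\int_0^\infty\Phi(u)\,d\mu(u),
\]
this forces $\Phi\in L^1(\mu)$.

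Neither part presents a serious difficulty, and I expect the only step that is not automatic to be the verification that $\mathbb{H}_{\Phi,\mu}F$ is again in $PW$, i.e.\ that it is entire of exponential type at most $\pi$ (the computation in 1) controls only the $L^2(\mathbb{R})$-norm of the restriction to the line). When ${\rm supp}(\mu)\subseteq[1,\infty)$ this is immediate, since then each dilate $F(\cdot/u)$ lies in $PW_{\pi/u}\subseteq PW$ and the estimate of 1) shows the $PW$-valued integral converges. For a general measure on $(0,\infty)$ I would pass to the Fourier side, using the reduction announced at the start of this section: from $\big(F(\cdot/u)\big)^{\wedge}(\xi)=u\,\widehat{F}(u\xi)$ one gets $\widehat{\mathbb{H}_{\Phi,\mu}F}(\xi)=\int_0^\infty u\,\Phi(u)\,\widehat{F}(u\xi)\,d\mu(u)$, and inspecting, for each fixed $\xi$, the set of $u$ on which $\widehat{F}(u\xi)\ne0$ shows precisely when this transform is supported in $[-\pi,\pi]$. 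Apart from this bookkeeping, both parts are routine; indeed part 2) is nothing but the Remark after Theorem \ref{th:31} specialized to $\mathcal{H}(E_1)=PW$, which does contain functions not vanishing at the origin.
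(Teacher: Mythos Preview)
Your approach is essentially identical to the paper's: part 1) is deduced from Theorem~\ref{th:31} with $E_1=E_2=e^{-\imath\pi z}$ and $v(u)\equiv 1$, and part 2) is obtained by evaluating $\mathbb{H}_{\Phi,\mu}\,{\rm sinc}$ at $z=0$, using ${\rm sinc}(0)=1$. Your additional caveat about verifying that $\mathbb{H}_{\Phi,\mu}F$ actually lands in $PW$ (and not merely in $L^2(\mathbb{R})$) goes beyond what the paper's proof records; the paper simply invokes Theorem~\ref{th:31} without further comment on this point.
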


\begin{proof} 1). This follows from Theorem \ref{th:31}, since for the  Paley-Wiener space one can take $v(u)\equiv 1$.

2)
  It is well known that the function
  $$
{\rm sinc}(z):=\frac{\sin(\pi z)}{\pi z}.
$$
  belongs to $PW$.  Since ${\rm sinc}(0)=1$, the condition $\Phi\in L^1(\mu)$ holds if  the operator \eqref{hausdorff} is defined on some subspace of  $PW$ which contains ${\rm sinc}$.
 \end{proof}

\begin{corollary} Let the measure $\mu$ be supported on some segment  $[\delta,b]$ ($0<\delta<b$). Then the Hausdorff operator \eqref{hausdorff}
is bounded on the Paley-Wiener space $PW$ if and only if $\Phi\in L^1(\mu)$.
\end{corollary}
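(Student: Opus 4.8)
The plan is to derive this corollary directly from Proposition \ref{th:PW}. The forward (sufficiency) direction requires a slight strengthening of part 1) of the proposition: we are given only $\Phi\in L^1(\mu)$, not $\Phi(u)\sqrt u\in L^1(\mu)$, so we cannot literally quote the stated bound. However, since $\mathrm{supp}(\mu)\subseteq(0,b)$, on the support of $\mu$ we have $\sqrt u<\sqrt b$, hence $\int_0^\infty|\Phi(u)|\sqrt u\,d\mu(u)\le\sqrt b\int_0^\infty|\Phi(u)|\,d\mu(u)<\infty$. Thus the integrability hypothesis of Proposition \ref{th:PW} 1) is automatically satisfied, and we conclude boundedness with $\|\mathbb{H}_{\Phi,\mu}\|\le\sqrt b\,\|\Phi\|_{L^1(\mu)}$.

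For the converse (necessity) direction, if $\mathbb{H}_{\Phi,\mu}$ is bounded on $PW$, then in particular it is defined on $PW$, which contains $\mathrm{sinc}$; by Proposition \ref{th:PW} 2) we get $\Phi\in L^1(\mu)$ directly. Note that here the support condition on $\mu$ plays no role — the necessity is unconditional — so the corollary could even be stated slightly more generally, but I would keep it as is to match the ``if and only if'' phrasing cleanly.

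I would write this as a short proof: first observe the trivial estimate $\sqrt u\le\sqrt b$ on $\mathrm{supp}(\mu)$ to pass from $\Phi\in L^1(\mu)$ to $\Phi\sqrt u\in L^1(\mu)$ and invoke part 1); then invoke part 2) for the reverse implication. There is no real obstacle here — the content is entirely in the already-proved Proposition \ref{th:PW}, and the only subtlety worth flagging is making sure the reader sees why the compact-support assumption closes the gap between the two hypotheses ``$\Phi\in L^1(\mu)$'' and ``$\Phi\sqrt u\in L^1(\mu)$'' appearing in the two parts of the proposition. If one wanted, the same argument shows more generally that whenever $\mu$ has bounded support the two conditions are equivalent, which is the conceptual reason the criterion becomes an exact characterization in this case.

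\begin{proof}
If $\Phi\in L^1(\mu)$, then since $\sqrt{u}<\sqrt{b}$ for $u\in(0,b)\supseteq\mathrm{supp}(\mu)$, we have
$$
\int_0^\infty|\Phi(u)|\sqrt{u}\,d\mu(u)\le\sqrt{b}\int_0^\infty|\Phi(u)|\,d\mu(u)<\infty,
$$
so $\Phi(u)\sqrt{u}$ is $\mu$-integrable on $(0,\infty)$ and $\mathbb{H}_{\Phi,\mu}$ is bounded on $PW$ by Proposition \ref{th:PW} 1). Conversely, if $\mathbb{H}_{\Phi,\mu}$ is bounded on $PW$, then it is in particular defined on $PW$, and Proposition \ref{th:PW} 2) gives $\Phi\in L^1(\mu)$.
\end{proof}
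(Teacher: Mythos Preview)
Your proof is correct and is exactly the argument the paper intends: the corollary is stated there without proof as an immediate consequence of Proposition~\ref{th:PW}, and your two-line derivation (bounding $\sqrt u$ by $\sqrt b$ on $\mathrm{supp}(\mu)$ for sufficiency, and quoting part~2) for necessity) is precisely how one fills in the omitted details.
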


Now we are aimed to re-right the integral operator \eqref{hausdorff} in a classical form for an arbitrary measure $\mu$ and to show that the operator that appeared  is Carleman.

Recall that an operator $T$ from $L^2(M_1)$ into $L^2(M_2)$ where $M_1$ and $M_2$ are two measure spaces is a  Carleman 
operator if  there exists a measurable function $K : M_2\times  M_1\to \mathbb{C}$ 
such that $K(x, \cdot) \in  L^2(M_1)$ almost everywhere in $M_2$ and 
$(Tf)(x) = \int_{M_1} K(x,y)f(y)\,dy$ almost everywhere in $M_2$, $f\in {\rm dom}(T)$ the domain of $T$.
Such a kernel $K$ is called a Carleman kernel.

 For the theory of Carleman operators we refer to   \cite{Weidman}  (see also \cite{Korotkov, Halmos_S}).

Let
$$
D:=\{ f: f=\widehat\psi, \psi\in C^2( \mathbb{R}), {\rm supp}(\psi)\subset [-\pi,\pi] \}
$$
(the ``hat'' stands for the Fourier transform).

\begin{lemma}\label{lm:dense_dB} $D$ is a dense subspace of $PW$.
\end{lemma}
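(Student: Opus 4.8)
The plan is to show that $D$ is contained in $PW$ and dense in it by transferring the problem to the Fourier side, where $PW = PW_\pi$ is isometrically the image of $L^2(-\pi,\pi)$. By the Paley--Wiener theorem, $f = \widehat{\psi}$ lies in $PW$ precisely when $\psi \in L^2(\mathbb{R})$ with $\operatorname{supp}(\psi) \subseteq [-\pi,\pi]$; since the functions $\psi \in C^2(\mathbb{R})$ with support in $[-\pi,\pi]$ are in particular in $L^2(-\pi,\pi)$, we get $D \subseteq PW$ immediately. For density, it suffices to prove that the set of such $\psi$ is dense in $L^2(-\pi,\pi)$, because the Fourier transform is (up to normalization) a unitary operator and $\|f\|_{PW} = \|\widehat{f}\|_{L^2} $ matches the $L^2(-\pi,\pi)$ norm of the corresponding $\psi$.

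First I would fix the convention for the Fourier transform used in the paper so that the Paley--Wiener identification $PW_\pi \cong L^2(-\pi,\pi)$ is an isometry (this is the standard convention under which $\operatorname{sinc}$ corresponds to the indicator of $[-\pi,\pi]$, up to a constant). Then I would reduce the claim to the elementary fact that $C^2$ functions supported in the open interval $(-\pi,\pi)$ are dense in $L^2(-\pi,\pi)$: given $g \in L^2(-\pi,\pi)$ and $\varepsilon > 0$, first approximate $g$ in $L^2$ by a continuous function vanishing near $\pm\pi$ (truncate and mollify, or use that $C_c(-\pi,\pi)$ is dense in $L^2(-\pi,\pi)$), then mollify with a smooth compactly supported bump to obtain a $C^\infty$ function with support inside $(-\pi,\pi)$; such a function is certainly $C^2$ with support in $[-\pi,\pi]$. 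Pulling this back through the Fourier transform yields a sequence in $D$ converging to the prescribed element of $PW$ in the $PW$-norm.

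I do not expect a genuine obstacle here; the only point requiring a little care is the bookkeeping of Fourier normalization constants so that the norm identity $\|\widehat{\psi}\|_{PW} = \|\psi\|_{L^2(-\pi,\pi)}$ holds exactly (or up to a fixed constant, which is harmless for a density statement), and making sure that the support condition $\operatorname{supp}(\psi) \subseteq [-\pi,\pi]$ in the definition of $D$ is compatible with approximating functions whose support sits strictly inside $(-\pi,\pi)$. Both are routine. I would therefore present the argument concisely: state the Fourier isometry $PW \cong L^2(-\pi,\pi)$, observe $D$ corresponds to $\{\psi \in C^2(\mathbb{R}) : \operatorname{supp}\psi \subseteq [-\pi,\pi]\} \subseteq L^2(-\pi,\pi)$, and invoke density of smooth compactly supported functions in $L^2$ of a bounded interval.
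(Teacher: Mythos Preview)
Your proposal is correct and follows essentially the same route as the paper: transfer to the Fourier side via the Paley--Wiener isometry $PW\cong L^2(-\pi,\pi)$ and invoke the density of $\{\psi\in C^2(\mathbb{R}):\operatorname{supp}\psi\subset[-\pi,\pi]\}$ in $L^2(-\pi,\pi)$. The only cosmetic difference is that the paper factors the density through the intermediate space $C^2[-\pi,\pi]$, whereas you argue directly by mollification; both are standard and equivalent.
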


\begin{proof} The space  $\{\psi\in C^2( \mathbb{R}): {\rm supp}(\psi)\subset [-\pi,\pi] \}$ is dense in $ C^2[-\pi,\pi]$ with respect to the $L^2$ metric.  Since $ C^2[-\pi,\pi]$ is dense in  $L^2(-\pi,\pi)$,   $D$ is a dense subspace of $PW$.
\end{proof}

\begin{theorem}\label{th:K} Let  $\Phi\in L^1(\mu)$.  Then the operator $\mathbb{H}_{\Phi,\mu}$  with the domain $D$ is equal  to a Carleman operator $\mathbb{K}_{\Phi,\mu}$   in $L^2( \mathbb{R})$. More precisely,  for each $f\in D$ and $t\in \mathbb{R}$
 \begin{equation}\label{K}
 (\mathbb{H}_{\Phi,\mu}f)(t)=\int_{-\infty}^{\infty} K_{\Phi,\mu}(t,x)f(x)\,dx,
\end{equation}
where
$$
K_{\Phi,\mu}(t,x)=\int_0^\infty \Phi(u){\rm sinc}\left(\frac t u -x\right)\,d\mu(u)
$$
is a Carleman kernel.
\end{theorem}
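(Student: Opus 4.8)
The plan is to compute the action of $\mathbb{H}_{\Phi,\mu}$ on a function $f\in D$ directly using the reproducing kernel of $PW$, and to recognize the result as a Carleman integral operator. First I would recall that $PW=PW_\pi$ has reproducing kernel $k_x(t)={\rm sinc}(t-x)$, i.e. $g(x)=\langle g,k_x\rangle=\int_{-\infty}^\infty g(t)\,{\rm sinc}(t-x)\,dt$ for every $g\in PW$; in particular, for $f\in D\subset PW$ and any fixed $u>0$ the dilate $t\mapsto f(t/u)$ is again entire of exponential type $\pi/u\le$ (not necessarily $\le\pi$, so one must be a bit careful here — see below), but in any case its value at $t$ can be written via the sampling/reproducing formula. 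The cleanest route: write $f=\widehat\psi$ with $\psi\in C^2$, ${\rm supp}\,\psi\subset[-\pi,\pi]$, so that $f(z)=\int_{-\pi}^\pi\psi(\xi)e^{iz\xi}\,d\xi$, hence $f(t/u)=\int_{-\pi}^\pi\psi(\xi)e^{i(t/u)\xi}\,d\xi$. Then
\begin{align*}
(\mathbb{H}_{\Phi,\mu}f)(t)=\int_0^\infty\Phi(u)f\!\left(\frac tu\right)d\mu(u)=\int_0^\infty\Phi(u)\!\left(\int_{-\pi}^\pi\psi(\xi)e^{i(t/u)\xi}\,d\xi\right)d\mu(u).
\end{align*}

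Next I would insert the identity $\psi(\xi)=\int_{-\infty}^\infty f(x)\,\widehat{(\cdot)}^{-1}$ — more precisely, since $\psi=\widehat f$ up to normalization and $f\in L^2$, write $\psi(\xi)=\frac{1}{2\pi}\int_{-\infty}^\infty f(x)e^{-ix\xi}\,dx$ for $\xi\in[-\pi,\pi]$ (Fourier inversion; the integral converges nicely because $f\in PW\cap L^2$ and is even $L^1$-close since $\psi\in C^2$ forces $f(x)=O(|x|^{-2})$). Substituting and interchanging the order of integration — justified by absolute integrability: $\Phi\in L^1(\mu)$, $\psi$ is bounded with compact support, and $f(x)=O((1+|x|)^{-2})$ — one obtains
\begin{align*}
(\mathbb{H}_{\Phi,\mu}f)(t)=\int_{-\infty}^\infty f(x)\left(\int_0^\infty\Phi(u)\,\frac{1}{2\pi}\int_{-\pi}^\pi e^{i\xi(t/u-x)}\,d\xi\,d\mu(u)\right)dx.
\end{align*}
The inner $\xi$-integral is exactly $\frac{1}{2\pi}\int_{-\pi}^\pi e^{i\xi s}\,d\xi={\rm sinc}(s)$ with $s=t/u-x$, so the bracket equals $K_{\Phi,\mu}(t,x)=\int_0^\infty\Phi(u)\,{\rm sinc}(t/u-x)\,d\mu(u)$, which is \eqref{K}.

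It remains to check that $K_{\Phi,\mu}$ is a genuine Carleman kernel, i.e. measurable on $\mathbb{R}\times\mathbb{R}$ and $K_{\Phi,\mu}(t,\cdot)\in L^2(\mathbb{R})$ for a.e.\ (in fact every) $t$. Measurability follows from the joint measurability of $(t,x,u)\mapsto\Phi(u){\rm sinc}(t/u-x)$ and Fubini/Tonelli, using $\Phi\in L^1(\mu)$ and $|{\rm sinc}|\le 1$. For the $L^2$-in-$x$ claim, fix $t$ and apply Minkowski's integral inequality:
\begin{align*}
\|K_{\Phi,\mu}(t,\cdot)\|_{L^2(\mathbb{R})}\le\int_0^\infty|\Phi(u)|\left(\int_{-\infty}^\infty\left|{\rm sinc}\!\left(\frac tu-x\right)\right|^2dx\right)^{1/2}d\mu(u)=\|{\rm sinc}\|_{L^2(\mathbb{R})}\int_0^\infty|\Phi(u)|\,d\mu(u),
\end{align*}
where the inner integral is translation-invariant and equals $\|{\rm sinc}\|_2^2=1$; the right-hand side is finite precisely because $\Phi\in L^1(\mu)$. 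Hence $K_{\Phi,\mu}(t,\cdot)\in L^2(\mathbb{R})$ with a bound uniform in $t$, so $\mathbb{K}_{\Phi,\mu}$ is Carleman and \eqref{K} holds for every $f\in D$.

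The main obstacle is the Fubini interchange in the second paragraph: one must be sure the triple integral in $(u,\xi,x)$ (or $(u,x)$) is absolutely convergent. The key point making this work is that elements of $D$ are Fourier transforms of $C^2$ functions, so they decay like $|x|^{-2}$ on the real line; this decay, together with $\Phi\in L^1(\mu)$ and the boundedness of ${\rm sinc}$, gives the needed domination $\int_0^\infty\int_{-\infty}^\infty|\Phi(u)|\,|f(x)|\,|{\rm sinc}(t/u-x)|\,dx\,d\mu(u)\le\|\Phi\|_{L^1(\mu)}\,\|f\|_{L^1(\mathbb{R})}<\infty$. (This is also exactly why the domain was chosen to be $D$ rather than all of $PW$.) A minor secondary point is to confirm that the left-hand side $(\mathbb{H}_{\Phi,\mu}f)(t)$ is well defined pointwise for $f\in D$, which follows since $|f(t/u)|\le\|\psi\|_{L^1(-\pi,\pi)}$ is bounded uniformly in $u$ and $\Phi\in L^1(\mu)$; one may also note in passing that $\mathbb{H}_{\Phi,\mu}f$ indeed lies in $PW$ when $\Phi\sqrt u\in L^1(\mu)$ by Proposition \ref{th:PW}, though the theorem as stated only asserts the operator identity with values in $L^2(\mathbb{R})$.
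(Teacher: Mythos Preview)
Your proof is correct and follows essentially the same strategy as the paper: obtain the kernel representation by interchanging the $u$- and $x$-integrals via Fubini (justified because $f=\widehat\psi$ with $\psi\in C^2$ forces $f(x)=O(|x|^{-2})$, hence $f\in L^1$), and then verify the Carleman condition $K_{\Phi,\mu}(t,\cdot)\in L^2$ via Minkowski's inequality, with the same uniform bound $\|\Phi\|_{L^1(\mu)}$. The only difference is a cosmetic detour: the paper invokes the reproducing-kernel identity $f(s)=\int_{-\infty}^\infty f(x)\,{\rm sinc}(s-x)\,dx$ directly at $s=t/u$ --- which requires only $f\in PW$, not that the dilate $f(\cdot/u)$ belong to $PW$, so your worry there was unfounded --- whereas you rederive that very identity by writing $f=\widehat\psi$, inverting, and computing the $\xi$-integral explicitly.
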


\begin{proof} It is known that the function ${\rm sinc}(t-x)$ is a reproducing kernel for $PW$ (see, e.g.,  \cite{Higgins}). In particular, for $f\in D$, one has
$$
f\left({\frac t u}\right)=\int_{-\infty}^{\infty}f(x){\rm sinc}\left({\frac t u}-x\right)\,dx.
$$
Then by Fubini's Theorem
\begin{align*}
(\mathbb{H}_{\Phi,\mu}f)(t)&=\int_{0}^{\infty}\Phi(u)\int_{-\infty}^{\infty}f(x){\rm sinc}\left({\frac t u}-x\right)\,dx\,d\mu(u)\\
&=\int_{-\infty}^{\infty}f(x)\left(\int_0^\infty \Phi(u){\rm sinc}\left(\frac t u -x\right)\,d\mu(u)\right)\,dx\\
&=\int_{-\infty}^{\infty} K_{\Phi,\mu}(t,x)f(x)\,dx.
\end{align*}
The application of Fubini's Theorem is correct. Indeed,
since $f\in L^2(\mathbb{R})$ and $f=\widehat\psi$ where $\psi\in C^2(\mathbb{R})\cap L^1(\mathbb{R})$ for $f\in D$, we have  $f(x)=O(x^{-2})$ as $x\to\infty$, and thus
 $f\in L^1(\mathbb{R})$. It follows that
\begin{align*}
&\int_{-\infty}^{\infty}|f(x)|\int_0^\infty |\Phi(u)| \left|{\rm sinc}\left(\frac t u -x\right)\right|\,d\mu(u)\,dx\\
\le& \int_0^\infty |\Phi(u)|\,d\mu(u)\int_{-\infty}^{\infty}|f(x)|\,dx<\infty.
\end{align*}

Further, we have by the Minkowski inequality
\begin{align}\label{Carleman}
\|K_{\Phi,\mu}(t,\cdot)\|_{L^2(\mathbb{R})}&=\left(\int_{-\infty}^\infty |K_{\Phi,\mu}(t,x)|^2\,dx \right)^{\frac 12}\\  \nonumber
 &=\left(\int_{-\infty}^\infty \left|\int_0^\infty \Phi(u){\rm sinc}\left(\frac t u -x\right)\,d\mu(u)\right|^2\,dx \right)^{\frac 12}\\ \nonumber
 &\le \int_0^\infty \left(\int_{-\infty}^\infty \left|\Phi(u){\rm sinc}\left(\frac t u -x\right)\right|^2\,dx\right)^{\frac 12}\,d\mu(u)\\ \nonumber
 &= \int_0^\infty |\Phi(u)|\left(\int_{-\infty}^\infty {\rm sinc}^2\left(\frac t u -x\right)\,dx\right)^{\frac 12}\,d\mu(u)\\ \nonumber
 &= \int_0^\infty |\Phi(u)|\,d\mu(u)=\|\Phi\|_{L^1(\mu)}.
\end{align}
In particular $K_{\Phi,\mu}$ is a  Carleman kernel.

Denote by $\mathbb{K}_{\Phi,\mu}$ the integral  operator with the kernel $K_{\Phi,\mu}$. This operator  maps $D$ into $ L^2(\mathbb{R})$ because for all $f\in D$ one has by the Minkowski inequality
\begin{align*}
\|\mathbb{K}_{\Phi,\mu}f\|_{L^2(\mathbb{R})}&=\left\|\int_{-\infty}^\infty K_{\Phi,\mu}(\cdot,x)f(x)\,dx \right\|_{L^2(\mathbb{R})}\\  
&\le \int_{-\infty}^\infty \|K_{\Phi,\mu}(\cdot,x)\|_{L^2(\mathbb{R})}|f(x)|\,dx \\
&\le \|\Phi\|_{L^1(\mu)}\|f\|_{L^1(\mathbb{R})}<\infty.
\end{align*}

 Finally, consider the vector-valued function $k:\mathbb{R}\to L^2(\mathbb{R})$, $k(t):=K_{\Phi,\mu}(t,\cdot)$. Then $k$ is an inducing  function of the Carleman operator (below $\langle\cdot,\cdot \rangle$ stands for the inner product in $L^2(\mathbb{R})$)
$$
(Tf)(t)=\langle k(t),f \rangle
$$
with the domain $ {\rm dom}(T):=D^*$ where the star denotes  the complex conjugation (see, e.g., \cite[p. 141]{Weidman}, or  \cite[p. 63]{Halmos_S} where $k$ is called a Carleman function). If we consider the (bounded) operator $S$ in $L^2(\mathbb{R})$ of complex conjugation, $Sf=f^*$, then the  operator $\mathbb{K}_{\Phi,\mu}=TS$ with the domain $D$ is also Carleman (see, e.~g.,  \cite[Theorem 6.13]{Weidman}).
\end{proof}

The previous theorem gives us an opportunity to apply to our case the classical theory of integral operators. For example, the following corollaries are valid (in the following we assume that   $\Phi\in L^1(\mu)$). Recall also that  $D$ is dense in $PW$.

\begin{corollary} The  operator $\mathbb{H}_{\Phi,\mu}$  with the domain $D$ is closable in $L^2(\mathbb{R})$ if  $\Phi\in L^1(\mu)$.
\end{corollary}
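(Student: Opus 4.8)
The plan is to invoke the standard fact from the theory of Carleman operators that every Carleman operator is closable, together with Theorem~\ref{th:K} which identifies $\mathbb{H}_{\Phi,\mu}$ on the domain $D$ with the Carleman operator $\mathbb{K}_{\Phi,\mu}$.

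First I would recall the elementary reason why a Carleman operator is closable: if $T$ has inducing function $k(t)=K_{\Phi,\mu}(t,\cdot)\in L^2(\mathbb{R})$, then for $f\in D$ one has $(\mathbb{K}_{\Phi,\mu}f)(t)=\langle f, k(t)^{*}\rangle$, so if $f_j\to 0$ in $L^2(\mathbb{R})$ with $\mathbb{K}_{\Phi,\mu}f_j\to g$ in $L^2(\mathbb{R})$, then passing to a subsequence along which the convergence is also pointwise a.e., we get $g(t)=\lim_j\langle f_j,k(t)^{*}\rangle=0$ for a.e.\ $t$, since $k(t)^{*}\in L^2(\mathbb{R})$ and $f_j\to 0$ weakly. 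Hence $g=0$, which is precisely the closability criterion. Alternatively one simply cites \cite[Satz~6.11 or the surrounding discussion]{Weidman}, where it is shown that every Carleman operator possesses a densely defined adjoint and is therefore closable; in our setting $\mathrm{dom}(\mathbb{K}_{\Phi,\mu})=D$ is dense in $PW$, and one should note that closability in $L^2(\mathbb{R})$ is understood relative to the full space, the graph closure being taken inside $L^2(\mathbb{R})\times L^2(\mathbb{R})$.

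The only point requiring a word of care is that Theorem~\ref{th:K} already establishes, via the Minkowski estimate \eqref{Carleman}, that $K_{\Phi,\mu}(t,\cdot)\in L^2(\mathbb{R})$ for every $t$ with $\|K_{\Phi,\mu}(t,\cdot)\|_{L^2(\mathbb{R})}\le\|\Phi\|_{L^1(\mu)}$, so $K_{\Phi,\mu}$ is genuinely a Carleman kernel and $\mathbb{K}_{\Phi,\mu}$ is a genuine Carleman operator; there is nothing left to check. The hypothesis $\Phi\in L^1(\mu)$ is exactly what makes Theorem~\ref{th:K} applicable, so it is inherited here without further comment.

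\begin{proof} By Theorem~\ref{th:K} the operator $\mathbb{H}_{\Phi,\mu}$ with domain $D$ coincides with the Carleman operator $\mathbb{K}_{\Phi,\mu}$ in $L^2(\mathbb{R})$, whose kernel $K_{\Phi,\mu}$ satisfies $K_{\Phi,\mu}(t,\cdot)\in L^2(\mathbb{R})$ for every $t\in\mathbb{R}$ by \eqref{Carleman}. Every Carleman operator is closable: writing $k(t):=K_{\Phi,\mu}(t,\cdot)$, if $f_j\in D$, $f_j\to 0$ in $L^2(\mathbb{R})$ and $\mathbb{K}_{\Phi,\mu}f_j\to g$ in $L^2(\mathbb{R})$, then along a subsequence $(\mathbb{K}_{\Phi,\mu}f_j)(t)\to g(t)$ for a.e.\ $t$, while for every fixed $t$
$$
(\mathbb{K}_{\Phi,\mu}f_j)(t)=\int_{-\infty}^\infty K_{\Phi,\mu}(t,x)f_j(x)\,dx=\langle f_j, k(t)^{*}\rangle\to 0,
$$
since $k(t)^{*}\in L^2(\mathbb{R})$ and $f_j\to 0$ weakly in $L^2(\mathbb{R})$. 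Hence $g=0$, which means the graph of $\mathbb{K}_{\Phi,\mu}$ has a closure that is the graph of an operator, i.e.\ $\mathbb{K}_{\Phi,\mu}$, and therefore $\mathbb{H}_{\Phi,\mu}$ with domain $D$, is closable in $L^2(\mathbb{R})$ (see also \cite{Weidman}).
\end{proof}
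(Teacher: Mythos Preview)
Your proof is correct and follows the same approach as the paper: both invoke Theorem~\ref{th:K} to identify $\mathbb{H}_{\Phi,\mu}$ on $D$ with a Carleman operator and then use the fact that Carleman operators are closable. The paper simply cites \cite[Theorem~6.13]{Weidman} for this last fact, whereas you additionally supply the standard elementary argument, but the route is the same.
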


 \begin{proof} Indeed, each Carleman operator is closable  (see \cite[Theorem 6.13]{Weidman}).
  \end{proof}
  
 \begin{remark} In fact the Carleman operator $\mathbb{K}_{\Phi,\mu}$ with the domain 
 $$
 {\rm dom}(\mathbb{K}_{\Phi,\mu}):=\{g\in L^2(\mathbb{R}): \mathbb{K}_{\Phi,\mu}g\in  L^2(\mathbb{R})\}
 $$
 is closed (see, e.~g.,  \cite[p. 63]{Halmos_S}).
 \end{remark}

\begin{corollary} The operator $\mathbb{H}_{\Phi,\mu}$ is bounded in $PW$ if and only if the operator  $\mathbb{K}_{\Phi,\mu}$ in the right-hand side of \eqref{K} is bounded in $PW$. In this case \eqref{K} holds for all $f\in PW$.
\end{corollary}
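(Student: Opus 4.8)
The plan is to use Theorem \ref{th:K} to the effect that $\mathbb{H}_{\Phi,\mu}$ and $\mathbb{K}_{\Phi,\mu}$ have one and the same restriction to the subspace $D$, together with the density of $D$ in $PW$ furnished by Lemma \ref{lm:dense_dB}. Since boundedness ``in $PW$'' of an operator defined on the dense domain $D$ is just boundedness of its restriction to $D$ measured in the $PW$-norm -- and the latter coincides with the $L^2(\mathbb{R})$-norm on $PW$ -- the equivalence is immediate: $\mathbb{H}_{\Phi,\mu}$ is bounded in $PW$ exactly when its (common with $\mathbb{K}_{\Phi,\mu}$) restriction to $D$ is $PW$-bounded, i.e. exactly when $\mathbb{K}_{\Phi,\mu}$ is bounded in $PW$; and then the two bounded extensions to $PW$ coincide because they agree on the dense set $D$ and are continuous. (If one insists that a ``bounded operator in $PW$'' take values in $PW$, it suffices to recall that $PW$ is a closed subspace of $L^2(\mathbb{R})$, so an $L^2(\mathbb{R})$-limit of elements of $PW$ again lies in $PW$.)

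For the final clause I would argue by approximation. Let $T$ be the common bounded extension to $PW$ and fix $f\in PW$; choose $f_n\in D$ with $f_n\to f$ in $PW$, equivalently in $L^2(\mathbb{R})$. On the one hand $Tf_n\to Tf$ in $PW$, and since ${\rm sinc}(t-x)$ is a reproducing kernel for $PW$ one has the bound $|g(t)|\le\|g\|_{PW}$ for all $g\in PW$ and $t\in\mathbb{R}$, so $(Tf_n)(t)\to(Tf)(t)$ for every $t$. On the other hand $Tf_n=\mathbb{K}_{\Phi,\mu}f_n$, hence
\[
(Tf_n)(t)=\int_{-\infty}^{\infty}K_{\Phi,\mu}(t,x)f_n(x)\,dx ,
\]
and since $K_{\Phi,\mu}(t,\cdot)\in L^2(\mathbb{R})$ with $\|K_{\Phi,\mu}(t,\cdot)\|_{L^2(\mathbb{R})}\le\|\Phi\|_{L^1(\mu)}$ by \eqref{Carleman}, Cauchy--Schwarz gives
\[
\Bigl|\int_{-\infty}^{\infty}K_{\Phi,\mu}(t,x)\bigl(f_n(x)-f(x)\bigr)\,dx\Bigr|\le\|\Phi\|_{L^1(\mu)}\,\|f_n-f\|_{L^2(\mathbb{R})}\longrightarrow 0 .
\]
Letting $n\to\infty$ in both displays identifies $(Tf)(t)$ with $\int_{-\infty}^{\infty}K_{\Phi,\mu}(t,x)f(x)\,dx$ for every $t$, which is \eqref{K} for an arbitrary $f\in PW$.

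The proof is of ``soft'' type, and I do not expect a genuine obstacle. The two points that deserve care are the transition from norm convergence in $PW$ to pointwise convergence of values, for which the reproducing-kernel inequality $|g(t)|\le\|g\|_{PW}$ is exactly the right tool, and the uniform $L^2$-control of the kernel sections $K_{\Phi,\mu}(t,\cdot)$, which is already available from \eqref{Carleman}. One could add that the same density argument yields $\|\mathbb{H}_{\Phi,\mu}\|=\|\mathbb{K}_{\Phi,\mu}\|$ as operators on $PW$.
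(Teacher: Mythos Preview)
Your argument is correct and is exactly the density-plus-continuity reasoning the paper has in mind; note that the paper actually gives no proof for this corollary, treating it as an immediate consequence of Theorem~\ref{th:K} and Lemma~\ref{lm:dense_dB}. Your added care with pointwise convergence via the reproducing-kernel bound and the $L^2$ control from \eqref{Carleman} fills in precisely the details the paper omits.
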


\begin{corollary} Let $K_{\Phi,\mu}\in L^2(\mathbb{R}^2)$. Then $\mathbb{H}_{\Phi,\mu}$ is a Hilbert-Schmidt operator in $PW$ if and only if it is bounded in $PW$.  
\end{corollary}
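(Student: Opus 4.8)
The plan is to reduce everything to Theorem~\ref{th:K} together with the standard identification of $L^2$ kernels with Hilbert-Schmidt operators. One implication is free: every Hilbert-Schmidt operator is bounded, so if $\mathbb{H}_{\Phi,\mu}$ is Hilbert-Schmidt in $PW$ it is in particular bounded in $PW$.

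For the converse I would start from the hypothesis that $\mathbb{H}_{\Phi,\mu}$ is bounded in $PW$ and invoke the preceding corollary, which then guarantees that \eqref{K} holds for \emph{every} $f\in PW$. Equivalently, the bounded operator $\mathbb{H}_{\Phi,\mu}$ on $PW$ is exactly the restriction to the closed subspace $PW\subset L^2(\mathbb{R})$ of the integral operator $\mathbb{K}_{\Phi,\mu}$, and this restriction maps $PW$ into $PW$. (This is the one step that genuinely uses the boundedness assumption, since a priori $\mathbb{K}_{\Phi,\mu}$ need not preserve $PW$; it is also the place where one must be careful that \eqref{K} persists from the dense domain $D$ to all of $PW$, which is precisely what the previous corollary supplies.)

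Next I would use the assumption $K_{\Phi,\mu}\in L^2(\mathbb{R}^2)$: this is exactly the statement that $\mathbb{K}_{\Phi,\mu}$ is a Hilbert-Schmidt operator on $L^2(\mathbb{R})$, with Hilbert-Schmidt norm equal to $\|K_{\Phi,\mu}\|_{L^2(\mathbb{R}^2)}$. Fix an orthonormal basis $(e_n)$ of $PW$ and complete it to an orthonormal basis $(e_n)\cup(h_m)$ of $L^2(\mathbb{R})$. Since $\mathbb{K}_{\Phi,\mu}e_n=\mathbb{H}_{\Phi,\mu}e_n\in PW$ and the norm of $PW=\mathcal{H}(e^{-\imath\pi z})$ is just the $L^2(\mathbb{R})$-norm of the restriction to $\mathbb{R}$ (see \eqref{norm30}), one gets
\begin{align*}
\sum_n\|\mathbb{H}_{\Phi,\mu}e_n\|_{PW}^2
&=\sum_n\|\mathbb{K}_{\Phi,\mu}e_n\|_{L^2(\mathbb{R})}^2\\
&\le\sum_n\|\mathbb{K}_{\Phi,\mu}e_n\|_{L^2(\mathbb{R})}^2+\sum_m\|\mathbb{K}_{\Phi,\mu}h_m\|_{L^2(\mathbb{R})}^2
=\|K_{\Phi,\mu}\|_{L^2(\mathbb{R}^2)}^2<\infty,
\end{align*}
so $\mathbb{H}_{\Phi,\mu}$ is Hilbert-Schmidt in $PW$, in fact with Hilbert-Schmidt norm at most $\|K_{\Phi,\mu}\|_{L^2(\mathbb{R}^2)}$.

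I do not expect a serious obstacle: the argument is just assembling the representation \eqref{K} on all of $PW$, the $L^2$-kernel $\Leftrightarrow$ Hilbert-Schmidt correspondence, and the elementary fact that discarding part of an orthonormal basis of $L^2(\mathbb{R})$ can only decrease the sum of squared norms of the images. The only point deserving a sentence of care is, as noted above, the identification of $\mathbb{H}_{\Phi,\mu}$ on all of $PW$ with $\mathbb{K}_{\Phi,\mu}|_{PW}$, which is handed to us by the previous corollary.
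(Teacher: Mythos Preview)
Your proof is correct and follows essentially the same route as the paper: identify $\mathbb{H}_{\Phi,\mu}$ on all of $PW$ with $\mathbb{K}_{\Phi,\mu}|_{PW}$ via \eqref{K}, note that $K_{\Phi,\mu}\in L^2(\mathbb{R}^2)$ makes $\mathbb{K}_{\Phi,\mu}$ Hilbert--Schmidt on $L^2(\mathbb{R})$, and then restrict to the closed invariant subspace $PW$. The only cosmetic difference is that the paper simply invokes the hereditary nature of the Hilbert--Schmidt property for restrictions to closed invariant subspaces, whereas you spell this out with an explicit orthonormal-basis computation.
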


\begin{proof} Let  $\mathbb{H}_{\Phi,\mu}$  is bounded in $PW$.   Since the operator $\mathbb{K}_{\Phi,\mu}$ in the right-hand side of \eqref{K} is  Hilbert-Schmidt  in $L^2(\mathbb{R})$, it is bounded in $L^2(\mathbb{R})$. Therefore \eqref{K} holds for all $f\in PW$. Since $PW$ is a closed invariant subspace of the operator $\mathbb{K}_{\Phi,\mu}$ (the restriction of $\mathbb{K}_{\Phi,\mu}$ to $PW$ equals to  $\mathbb{H}_{\Phi,\mu}$),  the  restriction of $\mathbb{K}_{\Phi,\mu}$ to $PW$ is  Hilbert-Schmidt, as well, since the Hilbert-Schmidt property is hereditary.

 The ``only if'' statement is obvious.

\end{proof}

\begin{corollary} Let the operator $\mathbb{K}_{\Phi,\mu}$ in the right-hand side of \eqref{K} is nuclear in $L^2(\mathbb{R})$. Then $\mathbb{H}_{\Phi,\mu}$ is nuclear in $PW$  if and only if it is bounded in $PW$.
\end{corollary}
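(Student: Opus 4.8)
The plan is to mirror the proof of the preceding Hilbert--Schmidt corollary, replacing ``Hilbert--Schmidt'' by ``nuclear'' throughout and invoking the hereditary property of the nuclear (trace) class under restriction to a closed invariant subspace. First I would assume that $\mathbb{H}_{\Phi,\mu}$ is bounded in $PW$. By hypothesis the integral operator $\mathbb{K}_{\Phi,\mu}$ on the right-hand side of \eqref{K} is nuclear in $L^2(\mathbb{R})$, hence in particular bounded in $L^2(\mathbb{R})$; therefore the identity \eqref{K} extends from the dense subspace $D$ to all of $PW$ (this is exactly the content of the corollary on boundedness above, together with Lemma \ref{lm:dense_dB}). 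Consequently $PW$ is a closed subspace of $L^2(\mathbb{R})$ that is invariant under $\mathbb{K}_{\Phi,\mu}$, and the restriction of $\mathbb{K}_{\Phi,\mu}$ to $PW$ coincides with $\mathbb{H}_{\Phi,\mu}$.

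Next I would use the fact that if $P$ denotes the orthogonal projection of $L^2(\mathbb{R})$ onto the closed subspace $PW$, then $\mathbb{H}_{\Phi,\mu}=P\mathbb{K}_{\Phi,\mu}P$ as operators on $L^2(\mathbb{R})$ (using invariance of $PW$), and the nuclear class is a two-sided ideal in the algebra of bounded operators; multiplying a nuclear operator by the bounded operators $P$ on either side keeps it nuclear, and the resulting operator, viewed on $PW$, is $\mathbb{H}_{\Phi,\mu}$. Hence $\mathbb{H}_{\Phi,\mu}$ is nuclear in $PW$. The converse implication is trivial, since every nuclear operator is bounded.

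The only genuine point requiring care --- and the step I expect to be the main (though minor) obstacle --- is the passage from the identity \eqref{K} on the dense domain $D$ to its validity on all of $PW$: one must know a priori that $\mathbb{K}_{\Phi,\mu}$ is bounded on $L^2(\mathbb{R})$ before one is entitled to say it agrees with the bounded operator $\mathbb{H}_{\Phi,\mu}$ on the closure of $D$. This is why the nuclearity hypothesis on $\mathbb{K}_{\Phi,\mu}$ (which forces boundedness) is placed on the ambient operator rather than only on the restriction; with that in hand the argument is just the ideal property of the trace class plus the hereditariness of nuclearity under compression to an invariant subspace, exactly as in the Hilbert--Schmidt case. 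I would therefore present the proof in two short sentences modeled verbatim on the proof of the previous corollary, citing the standard fact (e.g.\ \cite{Weidman}) that the nuclear class is an ideal and is preserved under restriction to a closed invariant subspace.
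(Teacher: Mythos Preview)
Your proposal is correct and follows essentially the same approach as the paper: the paper simply states that the proof is similar to that of the preceding Hilbert--Schmidt corollary, and your argument---nuclearity of $\mathbb{K}_{\Phi,\mu}$ forces boundedness, hence \eqref{K} extends to all of $PW$, and then nuclearity passes to the restriction on the closed invariant subspace $PW$ via the ideal property---is exactly that template with the obvious substitution. Your explicit use of the compression $P\mathbb{K}_{\Phi,\mu}P$ is a slightly more detailed justification of the hereditary step than the paper gives, but the underlying idea is identical.
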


\begin{proof}  The proof is similar to the proof of the previous corollary.
\end{proof}

Analogously we have the following corollary.

\begin{corollary} Let the operator  $\mathbb{K}_{\Phi,\mu}$ in the right-hand side of \eqref{K} is compact  in $L^2(\mathbb{R})$. Then $\mathbb{H}_{\Phi,\mu}$ is  compact in $PW$ if and only if it is bounded in $PW$. In this case,  \eqref{K} holds for all $f\in PW$.
\end{corollary}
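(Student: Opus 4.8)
The plan is to imitate the proof of the previous corollary (the Hilbert–Schmidt case), replacing ``Hilbert–Schmidt'' by ``compact'' throughout, and exploiting that compactness is inherited by restrictions to closed invariant subspaces. First I would recall the setup from Theorem \ref{th:K}: the Carleman operator $\mathbb{K}_{\Phi,\mu}$, defined a priori on $D$, extends to an integral operator on $L^2(\mathbb{R})$; by hypothesis this extension is compact, hence in particular bounded on $L^2(\mathbb{R})$. Since a bounded operator on $L^2(\mathbb{R})$ agrees with $\mathbb{K}_{\Phi,\mu}$ on the dense subspace $D\subset PW$ (Lemma \ref{lm:dense_dB}), and since on $D$ we have $\mathbb{K}_{\Phi,\mu}f=(\mathbb{H}_{\Phi,\mu}f)(t)$ by \eqref{K}, the formula \eqref{K} extends by continuity to all $f\in PW$; this is where we use boundedness of $\mathbb{K}_{\Phi,\mu}$ on $L^2(\mathbb{R})$ together with density of $D$.

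Next I would argue that $PW$ is an invariant subspace of $\mathbb{K}_{\Phi,\mu}$. For $f\in D$ we already know $\mathbb{K}_{\Phi,\mu}f=\mathbb{H}_{\Phi,\mu}f\in PW$ (the Hausdorff operator maps $PW$-functions to entire functions of exponential type $\le\pi$, and boundedness of $\mathbb{H}_{\Phi,\mu}$ on $PW$ is exactly the hypothesis of the ``if'' direction), and by density of $D$ in $PW$ together with continuity of $\mathbb{K}_{\Phi,\mu}$ and closedness of $PW$ in $L^2(\mathbb{R})$, it follows that $\mathbb{K}_{\Phi,\mu}(PW)\subseteq PW$. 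Moreover the restriction $\mathbb{K}_{\Phi,\mu}|_{PW}$ coincides with $\mathbb{H}_{\Phi,\mu}$, because the two agree on the dense set $D$ and both are bounded on $PW$.

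Finally, since $\mathbb{K}_{\Phi,\mu}$ is compact on $L^2(\mathbb{R})$ and $PW$ is a closed subspace invariant under $\mathbb{K}_{\Phi,\mu}$, the restriction $\mathbb{K}_{\Phi,\mu}|_{PW}=\mathbb{H}_{\Phi,\mu}$ is compact on $PW$ — compactness is hereditary under passage to restrictions on closed invariant subspaces (the unit ball of $PW$ is a subset of the unit ball of $L^2(\mathbb{R})$, whose image is precompact). The converse ``only if'' implication is trivial, since a compact operator is bounded. The only mild subtlety — the ``main obstacle,'' such as it is — is the justification that \eqref{K} and the invariance of $PW$ persist after passing from the dense domain $D$ to all of $PW$; this rests on the boundedness of the ambient operator $\mathbb{K}_{\Phi,\mu}$ on $L^2(\mathbb{R})$, which is supplied by the compactness hypothesis, and on the density of $D$ established in Lemma \ref{lm:dense_dB}. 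Everything else is a verbatim repetition of the argument used for the Hilbert–Schmidt corollary.
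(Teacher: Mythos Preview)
Your proposal is correct and follows essentially the same approach as the paper: the paper's proof is simply ``similar to the proof of the previous corollary,'' which in turn mirrors the Hilbert--Schmidt argument you reproduce, using boundedness of $\mathbb{K}_{\Phi,\mu}$ (from compactness) to extend \eqref{K} to all of $PW$, invoking that $PW$ is a closed invariant subspace, and noting that compactness is inherited by restrictions. Your write-up is in fact more detailed than the paper's, which leaves the parallel argument to the reader.
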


\begin{corollary}  The operator $\mathbb{H}_{\Phi,\mu}$ is a bounded as an operator between  $PW$ and $L^\infty(\mathbb{R})$ if (and only if)  $\Phi\in L^1(\mu)$.  In this case,  $\|\mathbb{H}_{\Phi,\mu}\|_{PW\to L^\infty}\le \|\Phi\|_{L^1}$.
\end{corollary}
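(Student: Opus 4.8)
The plan is to derive both implications from the elementary contractive embedding $PW \hookrightarrow L^\infty(\mathbb{R})$. First I would record that, since $x \mapsto {\rm sinc}(t-x)$ is the reproducing kernel of $PW$ at the point $t \in \mathbb{R}$ and $\|{\rm sinc}(t-\cdot)\|_{PW}^2 = {\rm sinc}(0) = 1$, the Cauchy--Schwarz inequality gives $|f(t)| \le \|f\|_{PW}$ for every $f \in PW$ and every $t \in \mathbb{R}$. Because the elements of $PW$ are entire functions whose restrictions to $\mathbb{R}$ are continuous, this is exactly the inequality $\|f\|_{L^\infty(\mathbb{R})} \le \|f\|_{PW}$.

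Granting this, sufficiency is a one-line estimate. For $f \in PW$, $\Phi \in L^1(\mu)$ and $t \in \mathbb{R}$, using that $t/u$ is real for every $u>0$,
$$
|(\mathbb{H}_{\Phi,\mu}f)(t)| \le \int_0^\infty |\Phi(u)|\,|f(t/u)|\,d\mu(u) \le \|f\|_{L^\infty(\mathbb{R})}\int_0^\infty |\Phi(u)|\,d\mu(u) \le \|\Phi\|_{L^1(\mu)}\,\|f\|_{PW}.
$$
Taking the supremum over $t$ yields $\mathbb{H}_{\Phi,\mu}f \in L^\infty(\mathbb{R})$ together with the claimed bound $\|\mathbb{H}_{\Phi,\mu}\|_{PW\to L^\infty} \le \|\Phi\|_{L^1}$.

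For the (parenthetical) necessity I would simply invoke Proposition \ref{th:PW}(2): if $\mathbb{H}_{\Phi,\mu}$ is bounded from $PW$ into $L^\infty(\mathbb{R})$, then in particular it is defined on $PW$, a space containing ${\rm sinc}$, and hence $\Phi \in L^1(\mu)$.

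I do not anticipate a genuine obstacle here; the only point deserving a word is that $\|\mathbb{H}_{\Phi,\mu}f\|_{L^\infty(\mathbb{R})}$ may be read as the ordinary supremum over $\mathbb{R}$, which is legitimate because $\mathbb{H}_{\Phi,\mu}f$ is continuous on $\mathbb{R}$ when $\Phi \in L^1(\mu)$ --- this is already implicit in the Carleman representation of Theorem \ref{th:K}, and can also be checked directly by dominated convergence using the uniform majorant $|f(t/u)| \le \|f\|_{PW}$ supplied by the first step.
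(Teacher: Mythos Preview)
Your proof is correct and, in fact, slightly more streamlined than the paper's. Both arguments rest on the same underlying fact---that the reproducing kernel ${\rm sinc}(s-\cdot)$ of $PW$ has unit norm---but you and the paper exploit it in different orders. You apply Cauchy--Schwarz pointwise to get $|f(t/u)|\le\|f\|_{PW}$ and then integrate in $u$; the paper first integrates in $u$ to form the Carleman kernel $K_{\Phi,\mu}(t,\cdot)$, invokes the bound $\|K_{\Phi,\mu}(t,\cdot)\|_{L^2}\le\|\Phi\|_{L^1}$ from \eqref{Carleman}, and then applies Cauchy--Schwarz in the $x$-variable. Because the paper's representation \eqref{K} was only established on the dense subspace $D$, it also needs a density argument to pass to all of $PW$, which your direct estimate avoids entirely. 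The paper's route has the virtue of illustrating the Carleman-operator viewpoint developed in Theorem~\ref{th:K}; yours is the cleaner self-contained argument. The necessity clause is handled identically in both, via Proposition~\ref{th:PW}(2).
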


\begin{proof}  Let $\Phi\in L^1(\mu)$. If $\mathbb{K}_{\Phi,\mu}$ denotes the integral operator in the right-hand side of \eqref{K}, one has by the Caychi-Bunyakovski's inequality and \eqref{Carleman} that for all $t\in \mathbb{R}$
$$
|\mathbb{K}_{\Phi,\mu}f(t)|\le  \int_0^\infty |\Phi(u)|\,d\mu(u)\|f\|_{L^2( \mathbb{R})}.
$$
Therefor the operator $\mathbb{K}_{\Phi,\mu}$ 
 is a bounded as an operator between  $L^2( \mathbb{R})$ and $L^\infty(\mathbb{R})$, and $\|\mathbb{K}_{\Phi,\mu}\|_{L^2\to L^\infty}\le \|\Phi\|_{L^1}$.  Since $D$ is dense in $PW$, formula \eqref{K} yields that  $\mathbb{H}_{\Phi,\mu}$ is a bounded as an operator between  $PW$ and $L^\infty(\mathbb{R})$ and $\|\mathbb{H}_{\Phi,\mu}\|_{PW\to L^\infty}\le \|\Phi\|_{L^1}$.
 
 The ``only if'' statement is obvious (see Proposition \ref{th:PW}). 

\end{proof}

\begin{corollary}
Assume  that $\Phi\in L^1(\mu)$ and $\Phi(u)=0$ $\mu$-a.e. on the interval $(0, 1)$. If  the operator  $\mathbb{K}_{\Phi,\mu}$ in the right-hand side of \eqref{K} is bounded in $L^2(\mathbb{R})$ then  $\mathbb{H}_{\Phi,\mu}$ is bounded in $PW$.
\end{corollary}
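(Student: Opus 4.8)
The plan is to show that the given $L^{2}(\mathbb{R})$-bounded operator $\mathbb{K}_{\Phi,\mu}$ leaves the subspace $PW\subset L^{2}(\mathbb{R})$ invariant; its restriction $\mathbb{K}_{\Phi,\mu}|_{PW}$ is then automatically a bounded operator on $PW$, and it remains only to identify this restriction with $\mathbb{H}_{\Phi,\mu}$. The first step is to observe that, because $\Phi(u)=0$ $\mu$-a.e. on $(0,1)$, for every $F\in PW$ the integral $(\mathbb{H}_{\Phi,\mu}F)(z)=\int_{1}^{\infty}\Phi(u)F(z/u)\,d\mu(u)$ converges absolutely for each $z\in\mathbb{C}$ and represents an entire function of exponential type at most $\pi$. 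Indeed, there is an absolute constant $c$ with $|F(w)|\le c\|F\|_{PW}e^{\pi|\mathrm{Im}\,w|}$ for all $F\in PW$ and $w\in\mathbb{C}$ (from the Fourier representation of elements of $PW$ together with the Cauchy--Schwarz inequality), and since $|\mathrm{Im}(z/u)|=|\mathrm{Im}\,z|/u\le|\mathrm{Im}\,z|$ for $u\ge 1$, this yields
$$|(\mathbb{H}_{\Phi,\mu}F)(z)|\le c\,\|F\|_{PW}\,\|\Phi\|_{L^{1}(\mu)}\,e^{\pi|\mathrm{Im}\,z|};$$
that $\mathbb{H}_{\Phi,\mu}F$ is entire follows exactly as in the proof of Theorem~\ref{th:necessary_sufficient_dB} (differentiation under the integral sign, cf. \cite{Mattner}).

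Next I would combine this with Theorem~\ref{th:K}: for $F\in D$ we have $\mathbb{K}_{\Phi,\mu}F=\mathbb{H}_{\Phi,\mu}F$, so $\mathbb{K}_{\Phi,\mu}F$ belongs to $L^{2}(\mathbb{R})$ and, by the previous step, is entire of exponential type $\le\pi$; hence $\mathbb{K}_{\Phi,\mu}F\in PW$ by the Paley--Wiener theorem. Thus $\mathbb{K}_{\Phi,\mu}(D)\subseteq PW$. Since $D$ is dense in $PW$ by Lemma~\ref{lm:dense_dB} and $\mathbb{K}_{\Phi,\mu}$ is, by hypothesis, bounded on $L^{2}(\mathbb{R})$, it follows that $\mathbb{K}_{\Phi,\mu}(PW)=\mathbb{K}_{\Phi,\mu}\bigl(\overline{D}\bigr)\subseteq\overline{\mathbb{K}_{\Phi,\mu}(D)}\subseteq\overline{PW}=PW$, the closures being taken in $L^{2}(\mathbb{R})$. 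Therefore $PW$ is a closed $\mathbb{K}_{\Phi,\mu}$-invariant subspace and $\mathbb{K}_{\Phi,\mu}|_{PW}$ is a bounded operator on $PW$ with $\|\mathbb{K}_{\Phi,\mu}|_{PW}\|\le\|\mathbb{K}_{\Phi,\mu}\|_{L^{2}(\mathbb{R})\to L^{2}(\mathbb{R})}$.

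Finally I would check that $\mathbb{K}_{\Phi,\mu}|_{PW}=\mathbb{H}_{\Phi,\mu}$ on all of $PW$, not merely on $D$. Given $F\in PW$, choose $F_{k}\in D$ with $F_{k}\to F$ in $PW$; then $\mathbb{K}_{\Phi,\mu}F_{k}=\mathbb{H}_{\Phi,\mu}F_{k}\to\mathbb{K}_{\Phi,\mu}F$ in $L^{2}(\mathbb{R})$, while applying the estimate of the first step to $F_{k}-F$ shows that $\mathbb{H}_{\Phi,\mu}F_{k}\to\mathbb{H}_{\Phi,\mu}F$ uniformly on compact subsets of $\mathbb{C}$; comparing the two limits on $\mathbb{R}$ gives $\mathbb{H}_{\Phi,\mu}F=\mathbb{K}_{\Phi,\mu}F$. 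Hence $\mathbb{H}_{\Phi,\mu}F\in PW$ and $\|\mathbb{H}_{\Phi,\mu}F\|_{PW}\le\|\mathbb{K}_{\Phi,\mu}\|_{L^{2}(\mathbb{R})\to L^{2}(\mathbb{R})}\|F\|_{PW}$, i.e.\ $\mathbb{H}_{\Phi,\mu}$ is bounded in $PW$.

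The crux --- and the main obstacle --- is the first step: a priori $\mathbb{K}_{\Phi,\mu}F$ is only known to lie in $L^{2}(\mathbb{R})$, and everything hinges on upgrading this to membership in $PW$, that is, on the exponential-type bound. This is precisely where the hypothesis $\Phi(u)=0$ $\mu$-a.e.\ on $(0,1)$ is indispensable: for $u<1$ the dilation $z\mapsto z/u$ enlarges the exponential type, so without this assumption $\mathbb{K}_{\Phi,\mu}$ need not leave $PW$ invariant. The remaining steps are routine density-and-continuity arguments.
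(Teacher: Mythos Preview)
Your proof is correct and follows essentially the same route as the paper's: show that $\mathbb{H}_{\Phi,\mu}F$ is entire of exponential type at most $\pi$ (using $\Phi=0$ on $(0,1)$), invoke Theorem~\ref{th:K} to identify $\mathbb{H}_{\Phi,\mu}$ with $\mathbb{K}_{\Phi,\mu}$ on $D$, read off the $L^2$-bound from the hypothesis on $\mathbb{K}_{\Phi,\mu}$, and extend by density of $D$ in $PW$. Your version is in fact slightly more careful than the paper's, since you use the uniform pointwise bound $|F(w)|\le c\|F\|_{PW}e^{\pi|\mathrm{Im}\,w|}$ (rather than the paper's $|f(z)|\le Me^{\pi|z|}$ with an $f$-dependent $M$) and explicitly verify that the continuous extension of $\mathbb{H}_{\Phi,\mu}|_D$ coincides with $\mathbb{H}_{\Phi,\mu}$ on all of $PW$---a step the paper leaves implicit.
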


\begin{proof} The function  $\mathbb{H}_{\Phi,\mu}f$ is entire for $f\in PW$ (see the proof of Theorem \ref{th:31}). Since  $\mathbb{K}_{\Phi,\mu}$  is bounded in $L^2(\mathbb{R})$, the  equality  \eqref{K} shows that the restriction  of  $\mathbb{H}_{\Phi,\mu}f$ to  $\mathbb{R}$  belongs to $L^2(\mathbb{R})$  for $f\in D$.  Next,  if $|f(z)|\le Me^{\pi |z|}$  for sufficiently large $z$ then
\begin{align*}
|\mathbb{H}_{\Phi,\mu}f(z)|&\le M\int_1^\infty |\Phi(u)|e^{\pi \frac{|z|}{u}}\,d\mu(u)\\
&\le  M\int_1^\infty |\Phi(u)|\,d\mu(u)e^{\pi |z|},
\end{align*}
 for such $z$, and so, $\mathbb{H}_{\Phi,\mu}f\in PW$. Finally, for  $f\in D$ we have by \eqref{K}
\begin{align*}
\|\mathbb{H}_{\Phi,\mu}f\|_{PW}&=\|\mathbb{H}_{\Phi,\mu}f\|_{L^2}=\|\mathbb{K}_{\Phi,\mu}f\|_{L^2}\\
&\le \|\mathbb{K}_{\Phi,\mu}\|\|f\|_{L^2}=\|\mathbb{K}_{\Phi,\mu}\|\|f\|_{PW}.
\end{align*}
Since $D$ is dense in $PW$, this completes the proof.
\end{proof}

The following two corollaries give us an information about extensions of a Hausdorff operator.

\begin{corollary}\label{cor:semi-Carleman}
Assume  that $\Phi(u)\sqrt{u}\in L^1(\mu)$. Then the operator $\mathbb{K}_{\Phi,\mu}$ is semi-Carleman in $L^2(\mathbb{R})$ (i.~e. $\forall x K_{\Phi,\mu}(\cdot, x)  \in L^2(\mathbb{R})$, see \cite{ST}). Moreover, $\mathbb{K}_{\Phi,\mu}$ is bounded as an operator between  $L^1(\mathbb{R})$  and  $L^2(\mathbb{R})$ and its norm does not exceed $\int_0^\infty |\Phi(u)|\sqrt{u}d\mu(u)$.
\end{corollary}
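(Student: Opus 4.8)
The plan is to prove the two assertions of Corollary~\ref{cor:semi-Carleman} by direct estimation of the kernel $K_{\Phi,\mu}(t,x)$, exploiting the scaling behaviour of the $\operatorname{sinc}$ function rather than the reproducing-kernel identity used in Theorem~\ref{th:K}. First I would fix $x\in\mathbb R$ and estimate the $L^2$-norm in the $t$ variable of $K_{\Phi,\mu}(t,x)=\int_0^\infty\Phi(u)\operatorname{sinc}(t/u-x)\,d\mu(u)$. Applying Minkowski's integral inequality in the $\mu$ variable, one reduces to controlling $\bigl(\int_{-\infty}^\infty\operatorname{sinc}^2(t/u-x)\,dt\bigr)^{1/2}$ for fixed $u>0$; the substitution $s=t/u$ turns this into $\sqrt u\,\bigl(\int_{-\infty}^\infty\operatorname{sinc}^2(s-x)\,ds\bigr)^{1/2}=\sqrt u$, since $\|\operatorname{sinc}\|_{L^2(\mathbb R)}=1$ by Parseval. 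Hence
\[
\|K_{\Phi,\mu}(\cdot,x)\|_{L^2(\mathbb R)}\le\int_0^\infty|\Phi(u)|\sqrt u\,d\mu(u)<\infty
\]
uniformly in $x$, which is exactly the semi-Carleman property of $\mathbb K_{\Phi,\mu}$ (this is the transpose-side analogue of the computation \eqref{Carleman}, where the $\sqrt u$ factor was absent because there the substitution went the other way).

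For the second assertion I would estimate $\|\mathbb K_{\Phi,\mu}f\|_{L^2(\mathbb R)}$ for $f\in L^1(\mathbb R)$ directly from $(\mathbb K_{\Phi,\mu}f)(t)=\int_{-\infty}^\infty K_{\Phi,\mu}(t,x)f(x)\,dx$ by one more application of Minkowski's inequality, now in the $x$ variable:
\[
\|\mathbb K_{\Phi,\mu}f\|_{L^2(\mathbb R)}\le\int_{-\infty}^\infty\|K_{\Phi,\mu}(\cdot,x)\|_{L^2(\mathbb R)}\,|f(x)|\,dx\le\Bigl(\int_0^\infty|\Phi(u)|\sqrt u\,d\mu(u)\Bigr)\|f\|_{L^1(\mathbb R)},
\]
using the uniform bound from the first step. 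This gives both the boundedness of $\mathbb K_{\Phi,\mu}$ from $L^1(\mathbb R)$ to $L^2(\mathbb R)$ and the claimed norm bound $\int_0^\infty|\Phi(u)|\sqrt u\,d\mu(u)$.

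The only point requiring care — and the main (mild) obstacle — is the legitimacy of the interchanges of integration: one must justify that the double integral defining $(\mathbb K_{\Phi,\mu}f)(t)$ converges absolutely and that Fubini/Minkowski apply when $f$ is merely in $L^1(\mathbb R)$. This follows from the crude bound $|\operatorname{sinc}|\le 1$, which gives $\int_0^\infty\int_{-\infty}^\infty|\Phi(u)||f(x)|\,dx\,d\mu(u)=\|\Phi\|_{L^1(\mu)}\|f\|_{L^1(\mathbb R)}<\infty$ (note $\Phi\in L^1(\mu)$ is implied by $\Phi(u)\sqrt u\in L^1(\mu)$ together with the finiteness issue only near $u=0$, which I would address by splitting the integral at $u=1$ — on $(0,1)$ the factor $\sqrt u\le 1$ does the wrong thing, so one uses instead $|\operatorname{sinc}(t/u-x)|\le 1$ and the hypothesis more carefully, or simply observes that the statement only asserts the semi-Carleman property and the norm estimate, both of which follow once the displayed integrals are finite). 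Once absolute convergence is in hand, the Minkowski steps are routine and the proof is complete.
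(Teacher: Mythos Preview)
Your proof is correct and essentially identical to the paper's: the paper also applies Minkowski's inequality in the $\mu$-variable (invoking the computation \eqref{Carleman} and noting that the substitution $s=t/u$ now produces the factor $\sqrt{u}$), and then Minkowski in $x$ for the $L^1\to L^2$ bound. Your worry about whether $\Phi\in L^1(\mu)$ is moot: the paper carries $\Phi\in L^1(\mu)$ as a standing hypothesis for all the corollaries in this section, and in any case the Minkowski steps only involve non-negative integrands, so no separate Fubini justification is needed for the two displayed estimates.
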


\begin{proof} Similar to the proof of formula \eqref{Carleman} one can show that for all $x\in \mathbb{R}$
$$
 \|K_{\Phi,\mu}(\cdot, x)\|_{L^2}\le \int_0^\infty |\Phi(u)|\sqrt{u}d\mu(u).
$$.
Further, by the Minkowski inequality we have for $L^1(\mathbb{R})$
\begin{align*}
\|K_{\Phi,\mu}f\|_{L^2}&\le \int_{-\infty}^\infty\|K_{\Phi,\mu}(\cdot, x)\|_{L^2}|f(x)|\,dx\\
&\le \int_0^\infty |\Phi(u)|\sqrt{u}d\mu(u)\|f\|_{L^1}.
\end{align*}
\end{proof}

If an operator $T$ is Carleman and semi-Carleman, it is called  bi-Carleman \cite{ST}.

\begin{corollary}
Assume  that $\Phi(u), \Phi(u)\sqrt{u}\in L^1(\mu)$, and $\mathbb{K}_{\Phi,\mu}$ is bounded in $L^2(\mathbb{R})$.  Then $\mathbb{K}_{\Phi,\mu}$ is bi-Carleman, and its adjoint  is
represented by the transposed kernel, and is therefore again a bi-Carleman
operator.
\end{corollary}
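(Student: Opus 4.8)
The plan is to put the statement together from three pieces: the two slice bounds for the kernel $K_{\Phi,\mu}$ already contained in the paper, a Fubini computation identifying the Hilbert–space adjoint of $\mathbb{K}_{\Phi,\mu}$ with the transposed–kernel integral operator, and the observation that transposing a kernel simply interchanges the two defining conditions of a bi-Carleman operator. First I would record that $\mathbb{K}_{\Phi,\mu}$ itself is bi-Carleman. Indeed, estimate \eqref{Carleman} from the proof of Theorem \ref{th:K} gives $K_{\Phi,\mu}(t,\cdot)\in L^2(\mathbb R)$ for every $t$ with $\|K_{\Phi,\mu}(t,\cdot)\|_{L^2}\le\|\Phi\|_{L^1(\mu)}$, so $\mathbb{K}_{\Phi,\mu}$ is a Carleman operator; and Corollary \ref{cor:semi-Carleman}, which uses $\Phi(u)\sqrt u\in L^1(\mu)$, gives $K_{\Phi,\mu}(\cdot,x)\in L^2(\mathbb R)$ for every $x$ with $\|K_{\Phi,\mu}(\cdot,x)\|_{L^2}\le\int_0^\infty|\Phi(u)|\sqrt u\,d\mu(u)$, so $\mathbb{K}_{\Phi,\mu}$ is semi-Carleman as well.

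Next I would identify the adjoint. Since $\mathbb{K}_{\Phi,\mu}$ is assumed bounded on $L^2(\mathbb R)$, its adjoint $\mathbb{K}_{\Phi,\mu}^{*}$ is a bounded operator; let $\mathbb{K}_{\Phi,\mu}^{\top}$ denote the integral operator with the transposed kernel $K^{\top}(t,x):=\overline{K_{\Phi,\mu}(x,t)}$. For $f\in L^2(\mathbb R)$ (recalling that, being Carleman, $\mathbb{K}_{\Phi,\mu}$ acts on such $f$ by a pointwise convergent integral) and $g$ in the dense subspace $L^1(\mathbb R)\cap L^2(\mathbb R)$ one has, by Cauchy--Schwarz in $x$ and then integration in $t$,
\[
\int_{\mathbb R}\!\int_{\mathbb R}|K_{\Phi,\mu}(t,x)|\,|f(x)|\,|g(t)|\,dx\,dt\le\|\Phi\|_{L^1(\mu)}\,\|f\|_{L^2}\,\|g\|_{L^1}<\infty ,
\]
so Fubini's theorem applies to $\langle\mathbb{K}_{\Phi,\mu}f,g\rangle$ and yields $\langle\mathbb{K}_{\Phi,\mu}f,g\rangle=\langle f,\mathbb{K}_{\Phi,\mu}^{\top}g\rangle$, where $\mathbb{K}_{\Phi,\mu}^{\top}g\in L^2(\mathbb R)$ by the Minkowski inequality together with the Carleman slice bound $\|K_{\Phi,\mu}(t,\cdot)\|_{L^2}\le\|\Phi\|_{L^1(\mu)}$. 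Comparing with $\langle\mathbb{K}_{\Phi,\mu}f,g\rangle=\langle f,\mathbb{K}_{\Phi,\mu}^{*}g\rangle$ and letting $f$ vary over $L^2(\mathbb R)$ gives $\mathbb{K}_{\Phi,\mu}^{*}g=\mathbb{K}_{\Phi,\mu}^{\top}g$ for all $g\in L^1\cap L^2$. To upgrade this to all of $L^2$ I would use that, by the semi-Carleman slice bound, for every $g\in L^2(\mathbb R)$ and every $t$ the integral $\int_{\mathbb R}K^{\top}(t,x)g(x)\,dx$ converges absolutely and is a continuous linear functional of $g$; approximating $g\in L^2$ by elements of $L^1\cap L^2$ and passing to a limit (along a subsequence for a.e. $t$) then gives $(\mathbb{K}_{\Phi,\mu}^{*}g)(t)=\int_{\mathbb R}K^{\top}(t,x)g(x)\,dx$ a.e., i.e. $\mathbb{K}_{\Phi,\mu}^{*}$ is represented by the transposed kernel.

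Finally, $\mathbb{K}_{\Phi,\mu}^{*}$ is again bi-Carleman: its kernel is $K^{\top}(t,x)=\overline{K_{\Phi,\mu}(x,t)}$, whose ``Carleman slices'' $K^{\top}(t,\cdot)$ are the ``semi-Carleman slices'' of $K_{\Phi,\mu}$ and whose ``semi-Carleman slices'' $K^{\top}(\cdot,x)$ are the ``Carleman slices'' of $K_{\Phi,\mu}$; both families lie in $L^2(\mathbb R)$, with the respective uniform bounds $\int_0^\infty|\Phi(u)|\sqrt u\,d\mu(u)$ and $\|\Phi\|_{L^1(\mu)}$ — which is exactly where both hypotheses $\Phi,\Phi(u)\sqrt u\in L^1(\mu)$ enter. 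I expect the only genuinely delicate step to be the last part of the adjoint identification — showing that the transposed kernel represents the full Hilbert–space adjoint and not merely a restriction of it — which forces one to combine the Fubini computation on the dense subspace $L^1\cap L^2$ with the uniform $L^2$-bounds on the kernel slices; everything else is bookkeeping.
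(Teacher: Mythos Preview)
Your argument is correct. The first paragraph, establishing that $\mathbb{K}_{\Phi,\mu}$ is bi-Carleman by combining the Carleman estimate \eqref{Carleman} with Corollary \ref{cor:semi-Carleman}, is exactly what the paper does. Where you diverge is in the treatment of the adjoint: the paper simply invokes \cite[Theorem 3.3]{ST}, which asserts that a bounded bi-Carleman operator has an adjoint represented by the transposed kernel (and hence is again bi-Carleman), whereas you reprove this result by hand via the Fubini computation on $L^1\cap L^2$ and the subsequent density/slice-bound extension. Your route is more self-contained and makes transparent exactly how each of the two integrability hypotheses is used (one for each family of slices), at the cost of having to handle the mildly delicate upgrade from the dense subspace to all of $L^2$; the paper's route is a one-line citation. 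Both are fine.
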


\begin{proof}
Since the operator $\mathbb{K}_{\Phi,\mu}$ is bi-Carleman by Theorem \ref{th:K} and Corollary \ref{cor:semi-Carleman} and bounded in  $L^2(\mathbb{R})$, the statement of the corollary follows from   \cite[Theorem 3.3]{ST}.
\end{proof}

\end{document}